\newtheorem{theorem}{Theorem}[section]
\newtheorem*{theorem*}{Theorem}
\newtheorem*{corollary*}{Corollary}
\newtheorem{lemma}[theorem]{Lemma}
\newtheorem{corollary}[theorem]{Corollary}
\newtheorem{proposition}[theorem]{Proposition}
\newtheorem{question}[theorem]{Question}
\theoremstyle{definition}
\theoremstyle{remark}
\newcommand{\Q}{\mathbb{Q}}
\newcommand{\N}{\mathbb{N}}
\newcommand{\A}{\mathcal{A}}
\newcommand{\B}{\mathcal{B}}
\renewcommand{\P}{\mathcal{P}}
\newcommand{\U}{\mathcal{U}}
\newcommand{\V}{\mathcal{V}}
\newcommand{\explicitSet}[1]{\left\lbrace #1 \right\rbrace}
\newcommand{\brackets}[1]{\left\langle #1 \right\rangle}
\newcommand{\set}[2]{\explicitSet{#1 \colon #2}}
\newcommand{\seq}[2]{\brackets{#1 \colon #2}}
\newcommand{\<}{\langle}
\renewcommand{\>}{\rangle}
\renewcommand{\a}{\alpha}
\renewcommand{\b}{\beta}
\newcommand{\dlt}{\delta}
\newcommand{\s}{\sigma}
\newcommand{\w}{\omega}
\newcommand{\0}{\emptyset}
\newcommand{\sub}{\subseteq}
\newcommand{\rest}{\!\restriction\!}
\newcommand{\card}[1]{\left\lvert #1 \right\rvert}
\newcommand{\PP}{\mathbb{P}}
\newcommand{\forces}{\Vdash}
\newcommand{\pwmf}{\nicefrac{\mathcal{P}(\w)}{\mathrm{Fin}}}
\newcommand{\pnmf}{\nicefrac{\mathcal{P}(\N)}{\mathrm{Fin}}}
\newcommand{\continuum}{\mathfrak{c}}
\newcommand{\dom}{\mathfrak d}
\newcommand{\bdd}{\mathfrak b}
\renewcommand{\split}{\mathfrak s}
\newcommand{\ch}{\ensuremath{\mathsf{CH}}\xspace}
\newcommand{\zfc}{\ensuremath{\mathsf{ZFC}}\xspace}
\newcommand{\pfa}{\ensuremath{\mathsf{PFA}}\xspace}
\newcommand{\ocama}{\ensuremath{\mathsf{OCA + MA}}\xspace}
\newcommand{\oca}{\ensuremath{\mathsf{OCA}}\xspace}
\newcommand{\M}{\mathbb{M}}
\newcommand{\dLP}{\diamondsuit(\mathsf{LP})}
\newcommand{\dLPm}{\diamondsuit^-(\mathsf{LP})}
\newcommand{\cLP}{\dagger(\mathsf{LP})}
\begin{document}

\title[$\dLP$ and nontrivial automorphisms of $\mathcal P(\omega)/\mathrm{Fin}$]{A parametrized $\diamondsuit$ for the Laver property \\ and nontrivial automorphisms of $\mathcal P(\omega)/\mathrm{Fin}$}
\author{Will Brian}
\address {
W. R. Brian\\
Department of Mathematics and Statistics\\
University of North Carolina at Charlotte\\
Charlotte, NC 
(USA)}
\email{wbrian.math@gmail.com}
\urladdr{wrbrian.wordpress.com}
\author{Alan Dow}
\address {
A. S. Dow\\
Department of Mathematics and Statistics\\
University of North Carolina at Charlotte\\
Charlotte, NC 
(USA)}
\email{adow@charlotte.edu}
\urladdr{https://webpages.uncc.edu/adow}

\keywords{forcing, Laver property, parametrized diamond principles, nontrivial automorphisms of $\mathcal P(\omega)/\mathrm{Fin}$}

\thanks{The first author is supported in part by NSF grant DMS-2154229}

\begin{abstract}
We introduce a new parametrized diamond principle denoted $\diamondsuit(\mathsf{LP})$. 
This principle is akin to the parametrized diamonds of Moore, Hru\v{s}\'ak, and D\v{z}amonja, each of which corresponds to some cardinal invariant of the continuum, and gives a $\diamondsuit$-like guessing principle implying the corresponding invariant is $\aleph_1$. Our principle $\diamondsuit(\mathsf{LP})$ is a $\diamondsuit$-like guessing principle implying the Laver property holds over a given inner model, such as the ground model in a forcing extension. 

We show $\diamondsuit(\mathsf{LP})$ holds in many familiar models of $\mathsf{ZFC}$ obtained by forcing, namely those obtained from a model of $\mathsf{CH}$ by a length-$\omega_2$ countable support iteration of proper Borel posets with the Laver property. This is true for essentially the same reason that the usual parametrized diamonds hold in similarly described forcing extensions where their corresponding cardinal invariant is $\aleph_1$. 

We also prove that if $\diamondsuit(\mathsf{LP})$ holds over an inner model of $\mathsf{CH}$ then there are nontrivial automorphisms of $\mathcal P(\omega)/\mathrm{Fin}$; in fact we get particularly nice automorphisms extending nontrivial involutions built around $P$-points in the ground model. 
Additionally, we show that, like the Sacks model, all automorphisms of $\mathcal P(\omega)/\mathrm{Fin}$ are somewhere trivial in the Mathias model. This puts a limitation on the kinds of automorphisms obtainable from $\diamondsuit(\mathsf{LP})$. 
\end{abstract}

\maketitle

\section{Introduction}

An automorphism of the Boolean algebra $\pwmf$ is called \emph{trivial} if it is induced by a function on $\w$.  
Walter Rudin proved 70 years ago in \cite{Rudin} that \ch implies there are a total of $2^\continuum$ automorphism of $\pwmf$. As there are only $\continuum$ functions $\w \to \w$, hence only $\continuum$ trivial automorphisms of $\pwmf$, this means \ch implies there are nontrivial automorphisms. 

Years later, Shelah proved in \cite[Chapter 4]{Shelah} that it is consistent with \zfc that all automorphisms of $\pwmf$ are trivial. 
Building on Shelah's work, Shelah and Stepr\={a}ns showed in \cite{SS} that \pfa implies all automorphisms are trivial. Veli\v{c}kovi\'c showed in \cite{Velickovic} that \ocama suffices, and that it is consistent with $\mathsf{MA}_{\aleph_1}$ to have nontrivial automorphisms. (Here $\mathsf{OCA}$ denotes Todor\v{c}evi\'c's Open Coloring Axiom, defined in \cite{Todorcevic}, sometimes denoted $\mathsf{OCA_T}$ or $\mathsf{OGA}$.) 
Building on work of Moore in \cite{Moore}, De Bondt, Farah, and Vignati showed recently in \cite{DFV} that \oca alone implies all automorphisms of $\pnmf$ are trivial. 
Further models in which all automorphisms of $\pwmf$ are trivial, models not satisfying $\mathsf{OCA}$,  have also been constructed. These include models with $\continuum > \aleph_2$ in \cite{Dow} or with $\dom = \aleph_1$ in \cite{FarSh}. (Recall that \oca implies $\dom = \continuum = \aleph_2$).

On the other hand, Veli\v{c}kovi\'c outlines an argument in \cite{Vel}, which he attributes to Baumgartner, showing that the existence of a $P$-point with character $\aleph_1$ implies the existence of a nontrivial automorphism (see also the second half of \cite{SS1}). In particular, the existence of nontrivial automorphisms is consistent with $\neg$\ch. 
Furthermore, Shelah and Stepr\={a}ns prove in \cite{SS1} that there are nontrivial automorphisms of $\pwmf$ in the $\aleph_2$-Cohen model. In fact, their argument shows that after forcing over a model of \ch to add $\aleph_2$ Cohen reals, every automorphism from the ground model extends to an automorphism in the forcing extension. 
The same authors also show in \cite{SS3} that there are nontrivial automorphisms if certain cardinal invariants related to $\dom$ are equal to $\aleph_1$. 

All this raises a few questions: 
\begin{itemize}
\item[$\circ$] What familiar set-theoretic axioms imply that all automorphisms of $\pwmf$ are trivial? 
\item[$\circ$] Or, to ask essentially the same question in a different way, in which of the familiar models of \zfc are all automorphisms of $\pwmf$ trivial?
\item[$\circ$] Furthermore, what kinds of nontrivial automorphisms can be constructed from familiar axioms? What kinds of nontrivial automorphisms exist in familiar models?
\end{itemize}
There are several ways to interpret the third question. Here we focus on basic topological properties of automorphisms (are they of finite order? somewhere trivial? what does the set of fixed points look like?) 
Another interesting interpretation comes from considering properties defined by the action of an automorphism on the trivial automorphisms (see \cite{WBshift,BF}). 

The objective of this paper is to contribute to the first and third of these questions. 
It is worth pointing out from the start that, strictly speaking, this paper does not contribute any new answers to the second question. 
All the nontrivial automorphisms of $\pwmf$ constructed here are constructed from a new combinatorial principle introduced below, denoted $\dLP$. 
We prove that $\dLP$ holds in models obtained from models of $\mathsf{CH}$ by a length-$\omega_2$ countable support iteration of proper Borel posets with the Laver property (e.g., the Laver, Mathias, Miller, Sacks, and Silver models). 
These are currently the only known nontrivial models in which $\dLP$ holds. 
Shelah and Stepr\={a}ns proved already in \cite{SS3} that there are nontrivial automorphisms in such models. (They define some new cardinal characteristics, which are $\aleph_1$ in these models, and they prove that when these cardinals are $\aleph_1$ then there are nontrivial automorphisms.) The main purposes of the paper are: 
\begin{itemize}
\item[$\circ$] The introduction of the parametrized $\diamondsuit$ principle $\dLP$, which is interesting in its own right. Aside from defining the principle and using it to construct nontrivial automorphisms of $\pwmf$, we also introduce two weaker forms of $\dLP$ and articulate several open questions concerning these three new principles. 
\item[$\circ$] The simplicity of the automorphisms built from $\dLP$. 
We mentioned earlier the Baumgartner-Veli\v{c}kovi\'c construction that builds a nontrivial automorphism from a $P$-point of character $\aleph_1$. 
Their automorphisms are involutions that fix precisely one ultrafilter (the $P$-point), and are trivial on every set in the dual ideal of that ultrafilter. 
Our construction generalizes this one, using $\dLP$ to build a nontrivial involution that fixes a nowhere dense $P$-filter of character $\aleph_1$, and is trivial on every set in the dual ideal of that filter. 
\end{itemize}


In addition to these two main objectives, in the final section of the paper we show that all automorphisms of $\pwmf$ are somewhere trivial in the Mathias model. 
The same is already known for the Sacks model (see \cite{SS2}). 
Because $\dLP$ holds in the Mathias and Sacks models, this puts a limitation on the kinds of automorphisms that can be obtained from $\dLP$: one cannot build a nowhere trivial automorphism from $\dLP$.  
While of course this fact can be deduced from the Sacks model alone, we have included a proof of the analogous fact in the Mathias model because it seems a worthwhile addition to the third question above.

\section{Bowties of functions}

The aim of this section is to present a general topological structure from which one can build a nontrivial automorphism of $\pwmf$. The argument directly generalizes the Baumgartner-Veli\v{c}kovi\'c construction in \cite{Vel} building an automorphism from a $P$-point of character $\aleph_1$. But the version presented here applies more broadly: we show that such structures (and their accompanying automorphisms) follow from $\dLP$, which holds in the Laver model (where there are no $P$-points of character $\aleph_1$) and the Silver model (where there are no $P$-points at all \cite{CG}).  

The power set of $\w$, $\P(\w)$, when ordered by inclusion, is a complete Boolean algebra. 
The Boolean algebra $\pwmf$ 
is the quotient of this Boolean algebra by the ideal of finite sets. 
For $A,B \sub \N$, we write $A =^* B$ to mean that $A$ and $B$ differ by finitely many elements, i.e. $\card{(A \setminus B) \cup (B \setminus A)} < \aleph_0$. 
The members of $\pwmf$ are equivalence classes of the $=^*$ relation, and we denote the equivalence class of $A \sub \w$ by $[A]_\text{Fin}$. 
The Boolean-algebraic relation $[A]_\text{Fin} \leq [B]_\text{Fin}$ is denoted on the level of representatives by writing $A \sub^* B$: that is, $A \sub^* B$ means $[A]_\text{Fin} \leq [B]_\text{Fin}$, or (equivalently) $B$ contains all but finitely many members of $A$. 

The Stone space of $\P(\w)$ is $\b\w$, the \v{C}ech-Stone compactification of the countable discrete space $\w$. The Stone space of $\pwmf$ is $\w^* = \b\w \setminus \w$. 

An \emph{almost permutation} on $\w$ is a bijection between cofinite subsets of $\w$. 
Every permutation of $\w$ is an almost permutation, but not every almost permutation is a permutation or even the restriction of one (e.g. consider the successor function $n \mapsto n+1$). 
Every almost permutation $f$ of $\w$ induces an automorphism $\alpha_f$ of $\pwmf$, defined by the formula 
$$\alpha_f \big( [A]_{\mathrm{Fin}} \big) \,=\, \big[ f[A] \big]_{\mathrm{Fin}}.$$ 
An automorphism of $\pwmf$ that is induced in this way by an almost permutation of $\w$ is called \emph{trivial}. 
Note that if $f$ is not an almost permutation of $\w$ then the formula above does not define an automorphism of $\pwmf$ (though it still defines a homomorphism). 
A \emph{trivial autohomeomorphism} of $\w^*$ means the Stone dual of a trivial automorphism of $\pwmf$. Specifically, if $f$ is an almost permutation of $\w$, it induces the trivial autohomeomorphism $f^*$ defined by taking 
$f^*(u)$ to be the ultrafilter generated by $\set{f^{-1}[A]}{A \in u}$. 

If $A,B$ are infinite subsets of $\w$, then $\nicefrac{\P(A)}{\mathrm{Fin}}$ and $\nicefrac{\P(B)}{\mathrm{Fin}}$ are both isomorphic to $\pwmf$. 
As with automorphisms of $\pwmf$, an isomorphism $\nicefrac{\P(A)}{\mathrm{Fin}} \to \nicefrac{\P(B)}{\mathrm{Fin}}$ is \emph{trivial} if it is induced by the action of an almost bijection $A \to B$. 
An automorphism of $\pwmf$ is \emph{trivial on $A \sub \w$} if its restriction to $\nicefrac{\P(A)}{\mathrm{Fin}}$ is trivial in this sense. 
An automorphism is \emph{somewhere trivial} if it is trivial on some infinite $A \sub \w$. 

Extending the $=^*$ relation from subsets of $\w$ to functions, 
$f =^* g$ means $\mathrm{dom}(f) =^* \mathrm{dom}(g) =^* \set{x \in \mathrm{dom}(f) \cap \mathrm{dom}(g)}{f(x)=g(x)}$. 
Similarly, if $f$ and $g$ are functions into $\w$ then 
$f \leq g$ means $\mathrm{dom}(f) \sub \mathrm{dom}(g)$ and $f(x) \leq g(x)$ for all $x \in \mathrm{dom}(f)$, and 
$f \leq^* g$ means $\mathrm{dom}(f) \sub^* \mathrm{dom}(g)$ and $f(x) \leq g(x)$ for all but finitely many $x \in \mathrm{dom}(f) \cap \mathrm{dom}(g)$. 

We now introduce the main object of this section, bowties of functions. In addition to generalizing the Baumgartner-Veli\v{c}kovi\'c argument mentioned above, these bowties are related to the bowtie points studied by the second author and Shelah in \cite{DS2} (also known as tie-points or butterfly points). We show in this section that the existence of a bowtie of functions implies there is a (particularly nice) nontrivial automorphism of $\pwmf$. In the next section we show that $\dLP$ implies the existence of a bowtie of functions. 

For each $\a < \w_1$, let $D_\a \sub \w$ and let $f_\a$ be a permutation $D_\a \to D_\a$ with no fixed points. 
The sequence $\seq{f_\a}{\a < \w_1}$ is a \emph{bowtie of functions} if 
\begin{itemize}
\item[$\circ$] $D_\a \sub^* D_\b$ and $D_\b \setminus D_\a$ is infinite whenever $\a < \b < \w_1$, 
\item[$\circ$] $f_\a =^* f_\b \rest D_\a$ whenever $\a \leq \b < \w_1$, 
\item[$\circ$] for every $A \sub \w$ there is some $\dlt_A < \w_1$ such that if $\dlt_A \leq \a < \w_1$ then $( D_\a \setminus D_{\dlt_A} ) \cap A$ and 
$( D_\a \setminus D_{\dlt_A} ) \setminus A$ 
are almost $f_\a$-invariant sets, by which we mean that 
$f_\a[ ( D_\a \setminus D_{\dlt_A} ) \cap A ] =^* ( D_\a \setminus D_{\dlt_A} ) \cap A$ 
and 
$f_\a[ ( D_\a \setminus D_{\dlt_A} ) \setminus A ] =^* ( D_\a \setminus D_{\dlt_A} ) \setminus A$.
\end{itemize}
If furthermore each $f_\a$ is an involution (meaning $f_\a^2 = \mathrm{id}$), then $\seq{f_\a}{\a < \w_1}$ is a \emph{bowtie of involutions}. 

\begin{theorem}\label{thm:bowties}
If there is a bowtie of functions then there is a nontrivial automorphism $F$ of $\pwmf$. 
Furthermore, there is a closed $P$-set in $\w^*$ with character $\aleph_1$, namely $K = \bigcap_{\a < \w_1}(\w \setminus D_\a)^*$, such that $F$ is trivial on $A$ whenever $A^*$ does not meet the boundary of $K$. 
Moreover, if the bowtie of functions is a bowtie of involutions, then $F$ is an involution of $\pwmf$.
\end{theorem}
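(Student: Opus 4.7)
The plan is to define, for each $A \sub \w$,
$$F([A]_{\mathrm{Fin}}) = [f_{\delta_A}[A \cap D_{\delta_A}] \cup (A \setminus D_{\delta_A})]_{\mathrm{Fin}},$$
and verify that this yields a nontrivial automorphism with the stated properties. First I would check that the formula does not depend on the choice of witness: for any $\b \geq \delta_A$, decomposing $A \cap D_\b = (A \cap D_{\delta_A}) \cup (A \cap (D_\b \setminus D_{\delta_A}))$, the coherence $f_\b \rest D_{\delta_A} =^* f_{\delta_A}$ handles the first piece and the almost $f_\b$-invariance of $A \cap (D_\b \setminus D_{\delta_A})$ lets it cancel with the corresponding part of $A \setminus D_{\delta_A}$, yielding $f_\b[A \cap D_\b] \cup (A \setminus D_\b) =^* f_{\delta_A}[A \cap D_{\delta_A}] \cup (A \setminus D_{\delta_A})$. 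The Boolean-algebra checks then reduce to evaluating $F$ at a common witness $\delta$: the formula writes $F([A])$ as a disjoint union of a piece inside $D_\delta$ (where $f_\delta$ acts bijectively) and a piece outside $D_\delta$ (left alone), so $F$ commutes with $\cup$, $\cap$, and complement; injectivity follows from this decomposition and the bijectivity of $f_\delta$, while surjectivity comes from constructing preimages via $f_\delta^{-1}$. When each $f_\a$ is an involution, applying the formula twice at a common $\delta$ gives $f_\delta^2[A \cap D_\delta] = A \cap D_\delta$ on the inside, so $F^2([A]) = [A]$.

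Next I would analyze $K = \bigcap_{\a < \w_1}(\w \setminus D_\a)^*$. It is a closed $P$-set because it is an intersection of an $\w_1$-chain of clopen neighborhoods, and its character is exactly $\aleph_1$: a countable neighborhood base can be refined to a subfamily $\{(\w \setminus D_{\a_n})^* : n \in \w\}$ forming a base, but setting $\a = \sup_n \a_n$, the neighborhood $(\w \setminus D_{\a+1})^*$ of $K$ cannot contain any $(\w \setminus D_{\a_n})^*$, since $D_{\a+1} \setminus D_{\a_n}$ is infinite. For the triviality claim, if $A^* \cap \boundary K = \emptyset$ then $A^* \cap K \sub \mathrm{int}(K)$, making both $A^* \cap K$ and $A^* \setminus K$ clopen in $\w^*$. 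This gives a splitting $A =^* A_1 \sqcup A_2$ where $A_1 \cap D_\a$ is finite for every $\a$ (since $A_1^* \sub K$) and $A_2 \sub^* D_\a$ for some $\a$ (since $A_2^* \sub \w^* \setminus K = \bigcup_\a D_\a^*$ and $A_2^*$ is compact). The formula then gives $F([A_1]) = [A_1]$ and $F([A_2]) = [f_\a[A_2]]$, so $F$ is trivialized on $A$ by the almost bijection equal to the identity on $A_1$ and $f_\a$ on $A_2$.

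The main obstacle is nontriviality. I would first note that $F$ is the identity on $K$: for $u \in K$ and any $A$, choosing $\delta \geq \delta_A$, $\w \setminus D_\delta \in u$ kills the $D_\delta$-pieces of both $A$ and $F([A])$, so $F([A]) \in u \iff A \setminus D_\delta \in u \iff A \in u$. Now suppose for contradiction that $F$ is induced by an almost permutation $h$. Evaluating the formula on infinite $A \sub D_\a$ gives $[h[A]] = [f_\a[A]]$, and the standard fact that two almost permutations inducing the same homomorphism of $\pwmf$ agree modulo finite forces $h \rest D_\a =^* f_\a$ for every $\a$. Since each $f_\a$ is fixed-point free, the fix set $T = \{n : h(n) = n\}$ meets every $D_\a$ in a finite set, so $T^* \sub K$. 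Conversely, $h^*$ fixes every $u \in K$ pointwise, and a trivial autohomeomorphism fixes $u$ precisely when the fix set of its generating map lies in $u$ (proved by $3$-coloring the non-fixed points along the orbits of $h$, producing a set in $u$ that $h$ moves off itself), so $T \in u$ for every $u \in K$, giving $K \sub T^*$. Hence $K = T^*$ is clopen, contradicting character $\aleph_1$. The delicate part of the argument is pairing the algebraic identity $h \rest D_\a =^* f_\a$ with the topological fact that $h^*$ fixes $K$ pointwise; once both are in hand, the character bound on $K$ closes the contradiction.
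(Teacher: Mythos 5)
Your proposal is correct and matches the paper's proof in all but one place: the definition of $F$, the stability argument over $\delta \geq \delta_A$, the Boolean-algebra checks, the involution computation, and the splitting $A = A_1 \sqcup A_2$ for triviality off $\boundary K$ are all the same. The one genuine variation is in the nontriviality argument. The paper shows directly (via a compactness/chain argument) that $K$ is not clopen, identifies $K$ unconditionally as $\mathrm{Fix}(\bar F)$ using the three-set lemma, and then cites Frol\'ik's theorem (``trivial $\Rightarrow$ clopen fixed-point set'') as a black box. You instead assume $F$ is induced by an almost permutation $h$, deduce $h \rest D_\a =^* f_\a$ from the uniqueness of almost permutations inducing a given isomorphism, show the fix set $T$ of $h$ satisfies $T^* \subseteq K$ (fixed-point-freeness of the $f_\a$) and $K \subseteq T^*$ (re-deriving the needed piece of Frol\'ik via the three-colouring argument), and then contradict the character bound which you proved separately. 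Both routes use the same underlying machinery; yours has the small advantage of explicitly verifying the ``character $\aleph_1$'' and ``$P$-set'' claims that the paper's proof leaves implicit, at the cost of re-proving a bit of Frol\'ik inline. One place to tighten: when concluding that $F \rest \nicefrac{\P(A_2)}{\mathrm{Fin}}$ is induced by $f_\a \rest A_2$, you should note (as the paper does) that $A_2 \sub^* D_\a$ forces $\delta_X \leq \a$ uniformly for all $X \sub A_2$; otherwise the identity $F([X]) = [f_\a[X]]$ is not immediate for subsets of $A_2$.
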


%
%

\begin{proof}
Fix a bowtie of functions $\seq{f_\a}{\a < \w_1}$. 
For each $A \sub \w$, let $\dlt_A$ denote the least ordinal having the property in the definition of ``bowtie of functions'' above. 
Define $F: \P(\w) \to \P(\w)$ by 
$$F(A) \,=\, f_{\dlt_A}[A \cap D_{\dlt_A}] \cup ( A \setminus D_{\dlt_A} ).$$
Note that if $A =^* B$ then $\dlt_A = \dlt_B$. Consequently, $F$ induces a mapping on $\pwmf$, which we denote $\tilde F$, namely
$$\tilde F\big([A]_\mathrm{Fin}\big) \,=\, \big[F[A]\big]_\mathrm{Fin}.$$
We aim to show that this mapping is a nontrivial automorphism with the properties described in the theorem.

Fix $A \sub \w$, and fix $\dlt$ with $\dlt_A \leq \dlt < \w_1$. 
Because $\dlt_A \leq \dlt$ we have $f_{\dlt_A} =^* f_\dlt \rest D_{\dlt_A}$, hence $f_\dlt[A \cap D_{\dlt_A}] =^* f_{\dlt_A}[A \cap D_{\dlt_A}]$. Furthermore, our choice of $\dlt_A$ ensures that $f_\dlt \big[(D_\dlt \setminus D_{\dlt_A}) \cap A \big] =^* (D_\dlt \setminus D_{\dlt_A} ) \cap A$. Consequently, 
\begin{align*}
F(A) &\,=\, f_{\dlt_A}[A \cap D_{\dlt_A}] \cup ( A \setminus D_{\dlt_A} ) \\
&\,=^*\, f_\dlt[A \cap D_{\dlt_A}] \cup ( A \setminus D_{\dlt_A} ) \\
&\,=^*\, f_\dlt[A \cap D_{\dlt_A}] \cup \big( (D_\dlt \setminus D_{\dlt_A}) \cap A \big) \cup ( A \setminus D_\dlt ) \\
&\,=^*\, f_\dlt \big[ (A \cap D_{\dlt_A}) \cup \big( (D_\dlt \setminus D_{\dlt_A}) \cap A \big) \big] \cup ( A \setminus D_\dlt ) \\ 
&\,=\, f_\dlt[A \cap D_\dlt] \cup (A \setminus D_\dlt).
\end{align*}
In other words, removing the subscripted $A$'s in the definition of $F(A)$ only changes the definition modulo $\mathrm{Fin}$. 

Consider some $A,B \sub \w$, and let $\dlt = \max\{\dlt_A,\dlt_B,\dlt_{A \cup B}\}$. Then 
\begin{align*}
F(A) \cup F(B) &\,=\, \big( f_{\dlt_A}[A \cap D_{\dlt_A}] \cup ( A \setminus D_{\dlt_A} ) \big) \cup \big( f_{\dlt_B}[B \cap D_{\dlt_B}] \cup ( B \setminus D_{\dlt_B} ) \big) \\ 
&\,=^*\, \big( f_\dlt[A \cap D_\dlt] \cup ( A \setminus D_\dlt ) \big) \cup \big( f_\dlt[B \cap D_\dlt] \cup ( B \setminus D_\dlt ) \big) \\ 
&\,=\, f_\dlt[(A \cup B) \cap D_\dlt] \cup \big( (A \cup B) \setminus D_\dlt \big) \\ 
&\,=^*\, f_{\dlt_{A \cup B}}[(A \cup B) \cap D_{\dlt_{A \cup B}}] \cup \big( (A \cup B) \setminus D_{\dlt_{A \cup B}} \big) \\ 
&\,=\, F(A \cup B).
\end{align*}
A similar computation shows that $F(A \cap B) =^* F(A) \cap F(B)$, and it follows from the definition of $F$ that $F(\w) = \w$ and $F(\0) = \0$. 
This shows that $\tilde F$ is a homomorphism $\pwmf \to \pwmf$. 

To show that this homomorphism is an automorphism, it remains to show $\tilde F$ is a bijection. 
It follows straightaway from the definition of $F$ that if $A$ is infinite then $F(A)$ is also infinite, hence the kernel of $\tilde F$ is $\{[\0]_\mathrm{Fin}\}$, which implies $\tilde F$ is injective. 

For surjectivity, fix $B \sub \w$, and let $A = f_{\dlt_B}^{-1}[B \cap D_{\dlt_B}] \cup (B \setminus D_{\dlt_B})$. If $\dlt$ is such that $\max\{\dlt_A,\dlt_B\} \leq \dlt < \w_1$, then 
\begin{align*}
F(A) &\,=\, f_{\dlt_A}[A \cap D_{\dlt_A}] \cup (A \setminus D_{\dlt_A}) \\
&\,=^*\, f_\dlt[A \cap D_\dlt] \cup (A \setminus D_\dlt) 
\end{align*}
\begin{align*}
&\,=\, f_\dlt \big[ \big( h_{\dlt_B}^{-1}[B \cap D_{\dlt_B}] \cup (B \setminus D_{\dlt_B}) \big) \cap D_\dlt \big] \\ 
&\qquad \qquad \cup \big( \big( f_{\dlt_B}^{-1}[B \cap D_{\dlt_B}] \cup ( B \setminus D_{\dlt_B} ) \big) \setminus D_\dlt \big) \\ 
&\,=^*\, f_\dlt \big[ \big( f_{\dlt_B}^{-1}[B \cap D_{\dlt_B}] \cup (B \setminus D_{\dlt_B}) \big) \cap D_\dlt \big] \cup ( B \setminus D_\dlt ) \\ 
&\,=\, f_\dlt \big[ \big( f_{\dlt_B}^{-1}[B \cap D_{\dlt_B}] \cup \big( B \cap (D_\dlt \setminus D_{\dlt_B}) \big) \big] \cup ( B \setminus D_\dlt ) \\ 
&\,=\, f_\dlt \big[ \big( f_{\dlt_B}^{-1}[B \cap D_{\dlt_B}] \big] \cup f_\dlt \big[ B \cap (D_\dlt \setminus D_{\dlt_B}) \big] \cup ( B \setminus D_\dlt ) \\ 
&\,=^*\, (B \cap D_{\dlt_B}) \cup \big( B \cap (D_\dlt \setminus D_{\dlt_B}) \big) \cup ( B \setminus D_\dlt ) \\ 
&\,=\, B.
\end{align*}
As $B \sub \w$ was arbitrary, this shows $\tilde F$ is surjective, and completes the proof that $\tilde F$ is an automorphism of $\pwmf$. 

It remains to show that $\tilde F$ is not a trivial automorphism, and to prove the ``furthermore'' and ``moreover'' assertions of the theorem. To see that $F$ is nontrivial, recall that Frol\'ik proved in \cite{Frolik} that 
if $F: \w^* \to \w^*$ is a trivial autohomeomorphism of $\w^*$, then $\mathrm{Fix}(F) = \set{u \in \w^*}{F(u)=u}$ is clopen. 

To apply Frol\'ik's result, we work temporarily in the topological category. Let $\bar F: \w^* \to \w^*$ denote Stone dual of $\tilde F$, i.e., the function 
$$\bar F(u) \,=\, \textstyle \bigcup \set{\tilde F^{-1}\big( [A]_\mathrm{Fin} \big)}{A \in u}.$$
Let $K = \bigcap_{\a < \w_1} ( \w \setminus D_\a )^*$. 
Suppose $u \in K$ and let $A \in u$. Then 
$$F(A) \,=\, f_{\dlt_A}[A \cap D_{\dlt_A}] \cup (A \setminus D_{\dlt_A}) \,\supseteq\, A \setminus D_{\dlt_A} \in u.$$
Hence $F$ maps sets in $u$ to sets in $u$, which implies $\bar F(u) = u$. 

Now suppose $u \notin K$. Then $D_\a \in u$ for some $\a < \w_1$. Using the three set lemma (see \cite{EdB}) and the fact that $f_{\dlt_{D_\a}}$ has no fixed points, $D_\a$ can be partitioned into three sets, $A_0, A_1, A_2$, such that $f_{\dlt_{D_\a}}[A_i] \cap A_i = \0$ for each $i$. Suppose without loss of generality $A_0 \in u$. If $\dlt \geq \max\{ \a,\dlt_{D_\a},\dlt_{A_0} \}$, then  
$$F(A_0) \,=^*\, f_\dlt[A_0 \cap D_\dlt] \cup (A_0 \setminus D_\dlt) \,=^*\, f_\dlt[A_0]$$
(because $A_0 \sub D_\a \sub^* D_\dlt$), so $F$ maps a set in $u$ to a set not in $u$. This shows $u$ is not a fixed point of $\bar F$. 

Therefore $K$ is precisely the set of fixed point of $\bar F$. To see that $K$ is not clopen, fix $C \sub \w$ such that $C^* \sub P$. 
Then $D_\a \sub^* \w \setminus C$, for all $\a$, and  
if $\a < \b < \w_1$ then $(C \cup D_\b) \setminus (C \cup D_\a)$ is infinite. 
Hence $\seq{\big( \w \setminus (C \cup D_\a) \big)^*}{ \a < \w_1 }$ is a decreasing sequence of nonempty clopen sets in $\w^*$. By compactness, $K \setminus C^* = \bigcap_{\a < \w_1} \big( \w \setminus (C \cup D_\a) \big)^* \neq \0$. Consequently, $C^* \neq K$. As $C^*$ was an arbitrary clopen subset of $K$, this shows that $K$ is not clopen. Applying the aforementioned result of Frol\'ik, it follows that $\bar F$ is nontrivial (hence $F$ is too).

To prove the ``furthermore'' part of the theorem, suppose that $A \sub \w$ and $A^*$ does not meet the boundary of $K$. This implies there is a partition of $A$ into two sets $B,C$ such that $B^* \sub \mathrm{Int}(K)$ and $C^* \cap K = \0$. 
The fact that $B^* \sub K$ means $B \cap D_\a =^* \0$ for all $\a < \w_1$. This implies $\dlt_B = 0$ and 
$$F(B) \,=\, f_0[B \cap D_0] \cup (B \setminus D_0) \,=^*\, B.$$ 
Hence $F \rest \nicefrac{\P(B)}{\mathrm{Fin}}$ is the identity map, and in particular it is trivial. 
As for $C$, first note that using a compactness argument as in the previous paragraph, the fact that $C^* \cap K = \0$ implies $C \sub^* D_{\dlt_0}$ for some particular $\dlt_0 < \w_1$. 
Increasing $\dlt_0$ if needed, we may (and do) assume $\dlt_0 \geq \dlt_C$. 
If $X \sub C$ and $\dlt_0 \leq \a < \w_1$, then 
$(D_\a \setminus D_{\dlt_0}) \cap X$ and $(D_\a \setminus D_{\dlt_0}) \setminus X$ are both finite, since $X \sub C \sub^* D_{\dlt_0}$. 
It follows that $\dlt_X \leq \dlt_0$. 
Consequently, if $X \sub C$ then 
$$F(X) \,=\, f_{\dlt_0}[X \cap D_{\dlt_0}] \cup (X \setminus D_{\dlt_0}) \,=\, f_{\dlt_0}[X \cap D_{\dlt_0}].$$ 
Hence $F \rest \nicefrac{\P(C)}{\mathrm{Fin}}$ is induced by $f_{\dlt_0} \rest C$ (or by $f_\dlt \rest C$ for any $\dlt \geq \dlt_0$); in particular this map is trivial. 

Finally, to prove the ``moreover'' part of the theorem, suppose each $f_\a$ is an involution. Note that $\dlt_A = \dlt_{\w \setminus A}$ for every $A \sub \w$. Then 
\begin{align*}
F\big( F(A) \big) &\,=\, f_{\dlt_{\w \setminus A}}\big[f_{\dlt_A}[A \cap D_{\dlt_A}] \big] \cup ( A \setminus D_{\dlt_{\w \setminus A}} ) \\ 
&\,=\, f_{\dlt_A}\big[f_{\dlt_A}[A \cap D_{\dlt_A}] \big] \cup ( A \setminus D_A ) \\ 
&\,=\, (A \cap D_{\dlt_A}) \cup ( A \setminus D_A ) \,=\, A.
\end{align*}
Thus if each $f_\a$ is an involution, so is $F$, and consequently $\tilde F$ as well. 
\end{proof}

Let us point out that the Baumgartner-Veli\v{c}kovi\'c argument outlined in \cite{Vel}, which shows that the existence of a $P$-point with character $\aleph_1$ implies the existence of a nontrivial automorphism, essentially shows that if there is a $P$-point with character $\aleph_1$ then there is a bowtie of involutions. 
Specifically, if $u$ is a $P$-point and $\seq{B_\a}{\a < \w_1}$ is a strictly $\sub^*$-decreasing basis for $u$, then one can define by recursion a bowtie of functions $\seq{f_\a}{\a < \w_1}$, where each $f_\a$ is an involution of $D_\a = \w \setminus B_\a$. 
(For each $A \sub \w$ the ordinal $\dlt_A$ is the least $\dlt$ such that either $B_\dlt \sub^* A$ or $B_\dlt \cap A =^* \0$. One of these options must happen for some $\dlt$ because $\set{B_\a}{\a < \w_1}$ is a basis for an ultrafilter.) 
Likewise, the bowties constructed in the next section from $\dLP$ are bowties of involutions. 
We stated the definition more generally in this section because there is some generality lost in the statement of Theorem~\ref{thm:bowties}, and no clarity gained in its proof, by restricting to involutions.

\section{$\dLP$}

Given a function $h: \w \to \w \setminus \{0\}$, an \emph{$h$-slalom} 
is a function $G$ with domain $\w$ such that for every $n \in \w$, $G(n) \sub \w^{n+1}$ and $|G(n)| \leq h(n)$. 
Given a function $f \in \w^\w$, we say that $G$ \emph{captures} $f$ if $f \rest (n+1) \in G(n)$ for every $n$. 
We shall only be interested in $h$-slaloms where $\lim_{n \to \infty}h(n) = \infty$, so let us define $\A = \set{h \in \w^\w}{h(n) \neq 0 \text{ for all $n$, and } \lim_{n \to \infty}h(n) = \infty}$, the set of functions admissible as the width of a slalom. 

We say that the \emph{Laver property} holds over an inner model $V$ if 
for every $f \in \w^\w$ that is bounded by some member of $V \cap \w^\w$, 
and for every $h \in V \cap \A$, 
there is an $h$-slalom $G \in V$ that captures $f$. 
A forcing poset $\PP$ is said to have the Laver property if forcing with $\PP$ produces a model that has the Laver property over the ground model. 

We now recall some definitions from \cite[Section 4]{Blass} and \cite{MHD} concerning cardinal characteristics of the continuum and their corresponding parametrized $\diamondsuit$ principles. 
An \emph{invariant} is a triple $(A,B,E)$ such that 
\begin{itemize}
\item[$\circ$] $A$ and $B$ are sets of size $\leq\mathfrak c$.
\item[$\circ$] $E \sub A \times B$ (i.e., $E$ is a relation with domain $A$ and codomain $B$).
\item[$\circ$] for every $a \in A$ there is some $b \in B$ with $(a,b) \in E$. 
\item[$\circ$] for every $b \in B$ there is some $a \in A$ with $(a,b) \notin E$. 
\end{itemize}
The \emph{evaluation} of an invariant $(A,B,E)$ is 
$$\<A,B,E\> \,=\, \min \set{\card{X}}{X \sub B \text{ and $\forall a \in A$ $\exists b \in X$ }(a,b) \in E}.$$
Most of the classical cardinal invariants studied in \cite{Blass} or \cite{vD} are naturally expressed in this form: for example $\dom = \<\w^\w,\w^\w,\leq^*\>$ and $\bdd = \<\w^\w,\w^\w,\not\geq^*\>$, $\split = \<[\w]^\w,[\w]^\w,\text{is split by}\>$ and $\mathfrak r = \<[\w]^\w,[\w]^\w,\text{does not split}\>$, etc. 

Building on the work of Hru\v{s}\'ak in \cite{Hrusak} and of Devlin and Shelah in \cite{DS}, Moore, Hru\v{s}\'ak, and D\v{z}amonja define in \cite{MHD} a guessing principle $\diamondsuit(A,B,E)$ associated to an invariant $(A,B,E)$. 
\begin{itemize}
\item[$\diamondsuit(A,B,E):\hspace{-9.75mm}$]\hspace{9.75mm} For every Borel map $F: 2^{<\w_1} \to A$ there is some $g: \w_1 \to B$ such that for every $f: \w_1 \to 2$ the set $\set{\a \in \w_1}{\big(F(f \rest \a),g(\a)\big) \in E}$ is stationary. 
\end{itemize}
In this context, a \emph{Borel} map $F: 2^{<\w_1} \to A$ in \cite{MHD} means that $A$ is a Borel subset of a Polish space, and the mapping $F \rest 2^\dlt$ is Borel for every $\dlt < \w_1$.


Let $\mathcal S = \set{G \in \prod_{n \in \w}\P(\w^n)}{G \text{ is an $h$-slalom for some }h \in \A}.$ 
For each $G \in \mathcal S$, define $\mathsf w_G : \w \to \w$ to be the function $\mathsf w_G(n) = |G(n)|$. In other words, $\mathsf w_G$ is the $\leq$-smallest $h \in \A$ such that $G$ is an $h$-slalom (the ``$\mathsf{w}$idth'' of $G$). 

Given an inner model $V$ of (a sufficiently large fragment of) \zfc, a real $r \in \w^\w$ is called \emph{$V$-bounded} if there is some $b \in \w^\w$ with $r \leq b$ (equivalently, some $b'$ with $r \leq^* b'$). 
The guessing principle $\diamondsuit(\mathsf{LP}_V)$ is defined as follows. 

\smallskip
\begin{itemize}
\item[$\diamondsuit(\mathsf{LP}_V):\hspace{-4.5mm}$]\hspace{4.5mm} 
For every Borel mapping $F: 2^{<\w_1} \to \w^\w$ and every sequence $\seq{h_\a}{\a < \w_1}$ of functions in $\A \cap V$, 
there is a function $g: \w_1 \to \mathcal S \cap V$ such that $\mathsf w_{g(\a)} = h_\a$ for every $\a < \w_1$, 
and for every $f: \w_1 \to 2$ 
$$\qquad \ \ \ \set{\a < \w_1}{\text{if $F(f \rest \a)$ is $V$-bounded then } g(\a) \text{ captures } F(f \rest \a)}$$
is stationary. 
\end{itemize}
\smallskip

Observe that in this definition, we do not require that the sequence $\seq{h_\a}{\a < \w_1}$ is in $V$, but only that each particular $h_\a$ is in $V$. 
If the inner model $V$ is clear from context, for example in a forcing extension, then $\diamondsuit(\mathsf{LP}_V)$ is abbreviated $\diamondsuit(\mathsf{LP})$. 

There is a clear relationship between $\dLP$ and the parametrized $\diamondsuit$ principles $\diamondsuit(A,B,E)$ from \cite{MHD}. It is the principle associated to the invariant $(\w^\w,\mathcal S \cap V,\text{captures if $V$-bounded})$, with the added requirement that we can require $\mathsf w_{g(\a)} = h_\a$ for any desired sequence $\seq{h_\a}{\a < \w_1}$ in $V \cap \A$. 

The principle $\dLP$ is stronger than necessary for obtaining nontrivial automorphisms of $\pwmf$. 
The following simplified version of $\dLP$ suffices for the task at hand, and is somewhat easier to work with (hence potentially easier to apply in other contexts). 

\smallskip
\begin{itemize}
\item[$\diamondsuit^-(\mathsf{LP}_V):\hspace{-7mm}$]\hspace{7mm} 
For every sequence $\seq{h_\a}{\a < \w_1}$ in $\A \cap V$, 
there is a function $g: \w_1 \to \mathcal S \cap V$ such that $\mathsf w_{g(\a)} = h_\a$ for every $\a < \w_1$ and 
$$\qquad \ \ \ \set{\a < \w_1}{g(\a) \text{ captures } r}$$
is stationary for every $V$-bounded $r \in \w^\w$. 
\end{itemize}
\smallskip
As before, if $V$ is clear from context then $\diamondsuit^-(\mathsf{LP}_V)$ is abbreviated $\dLPm$. 

In fact, even this weakening of $\dLP$ is stronger than needed to prove the existence of nontrivial automorphisms of $\pwmf$: our proof only requires that $\set{\a < \w_1}{g(\a) \text{ captures } r}$ be nonempty, not stationary. 
Let $\dagger(\mathsf{LP}_V)$, abbreviated $\dagger(\mathsf{LP})$ when $V$ is clear from context, denote the (seemingly) even weaker principle defined by replacing the word ``stationary'' with the word ``nonempty'' in the definition of $\diamondsuit^-(\mathsf{LP}_V)$.

\begin{proposition}
$\dLP$ implies $\dLPm$, and $\dLPm$ implies $\cLP$, and $\cLP$ implies that the Laver property holds over $V$. 
\end{proposition}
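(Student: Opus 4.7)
My plan is to prove the three implications in turn, noting that two of them are essentially free. For $\dLPm \Rightarrow \cLP$, the conclusion is immediate because every stationary subset of $\w_1$ is nonempty, so the same $g$ witnessing $\dLPm$ trivially witnesses $\cLP$. For $\cLP \Rightarrow$ the Laver property over $V$, given $f \in \w^\w$ bounded by some $b \in V \cap \w^\w$ and $h \in V \cap \A$, I would apply $\cLP$ to the constant sequence $\seq{h}{\a < \w_1}$ to obtain $g : \w_1 \to \mathcal S \cap V$ with $\mathsf w_{g(\a)} = h$ for every $\a$, and then use nonemptiness of $\set{\a < \w_1}{g(\a) \text{ captures } f}$ to extract some $\a$; setting $G := g(\a) \in \mathcal S \cap V$ gives an $h$-slalom in $V$ that captures $f$, as required.

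The only implication requiring a small idea is $\dLP \Rightarrow \dLPm$. The task is to absorb the universal quantifier ``for every $V$-bounded $r$'' from $\dLPm$ into the quantifier ``for every $f : \w_1 \to 2$'' supplied by $\dLP$, via a coding trick. I would fix once and for all a Borel surjection $\phi : 2^\w \twoheadrightarrow \w^\w$ (for instance $\phi(x)(n)$ equal to the least $k$ with $x(\<n,k\>) = 1$, or $0$ if no such $k$ exists). Given any sequence $\seq{h_\a}{\a < \w_1}$ in $\A \cap V$ as in the hypothesis of $\dLPm$, I would define $F : 2^{<\w_1} \to \w^\w$ by $F(s) = 0$ for $s \in 2^{<\w}$ and $F(s) = \phi(s \rest \w)$ for $s \in 2^\a$ with $\a \geq \w$. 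Each restriction $F \rest 2^\dlt$ is then continuous, hence Borel, so $F$ qualifies as a Borel map in the sense used in the paper. Applying $\dLP$ to $F$ and $\seq{h_\a}{\a < \w_1}$ produces a function $g : \w_1 \to \mathcal S \cap V$ with $\mathsf w_{g(\a)} = h_\a$ for each $\a$.

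To verify that this $g$ witnesses $\dLPm$, fix a $V$-bounded $r \in \w^\w$ and choose any $f : \w_1 \to 2$ with $\phi(f \rest \w) = r$; such $f$ exists because $\phi$ is surjective. Then $F(f \rest \a) = r$ for every $\a \geq \w$, and each such value is $V$-bounded since $r$ is. The set $\set{\a < \w_1}{F(f \rest \a) \text{ is $V$-bounded} \Rightarrow g(\a) \text{ captures } F(f \rest \a)}$ is stationary by $\dLP$, and intersecting with the club $[\w, \w_1)$ leaves a stationary subset of $\set{\a < \w_1}{g(\a) \text{ captures } r}$. The only step meriting any care is the construction of $\phi$ and $F$; once these are in hand, the verification is formal, and I do not expect any serious obstacle.
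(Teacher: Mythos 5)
Your proof is correct and follows essentially the same route as the paper: the two easy implications are handled identically, and for $\dLP \Rightarrow \dLPm$ you use the same device of composing a Borel surjection $2^\w \twoheadrightarrow \w^\w$ with the restriction map $s \mapsto s \rest \w$. One small caveat: the specific $\phi$ you suggest is Borel but not continuous (a perturbation of $x$ at a late coordinate $\langle n,k\rangle$ can change $\phi(x)(n)$ from $0$ to $k$), so the restrictions $F \rest 2^\dlt$ are Borel rather than continuous; this does not affect the argument since the paper's definition only requires Borel.
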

\begin{proof}
Suppose $\dLP$ holds. 
Let $F_0$ be a Borel surjection $2^\w \to \w^\w$, and define $F: 2^{<\w_1} \to \w^\w$ by setting $F(\s) = F_0(\s \rest \w)$ whenever $|\s| \geq \w$, and defining $F(\s)$ arbitrarily if $|\s| < \w$. 
Let $\seq{h_\a}{\a < \w_1}$ be an arbitrary sequence of functions in $\A \cap V$, and 
let $g: \w_1 \to \mathcal S \cap V$ be a function witnessing $\dLP$ for $F$ and $\seq{h_\a}{\a < \w_1}$. 
For every $V$-bounded $r \in \w^\w$, there is a function $f_r: \w \to 2$ with $F_0(f_r) = r$. 
If $f: \w_1 \to 2$ is any function with $f \rest \w = f_r$, then $F(f \rest \a) = r$ for all $\a \in [\w,\w_1)$, and therefore $\set{\a < \w_1}{g(\a) \text{ captures } r}$ is stationary. As this is true for every $V$-bounded $r \in \w^\w$, $g$ witnesses $\dLPm$ for $\seq{h_\a}{\a < \w_1}$. 

That $\dLPm$ implies $\cLP$ is clear from their definitions. 

To prove the final assertion of the proposition, suppose $\cLP$ holds. Let $r \in \w^\w$ be $V$-bounded, and let $h \in \A \cap V$. Applying $\cLP$ with the constant sequence $h_\a = h$, there is a function $g: \w_1 \to \mathcal S \cap V$ such that $\set{\a < \w_1}{g(\a) \text{ captures } r}$ nonempty and $\mathsf{w}_{g(\a)} = h$ for all $\a$. Hence there is an $h$-slalom in $V$ that captures $r$. 
\end{proof}

\begin{question}
Can any of these implications be reversed?
\end{question}

Like with the parametrized $\diamondsuit$ principles of \cite{MHD}, which hold in sufficiently nice forcing extensions where the corresponding cardinal characteristic is $\aleph_1$, $\dLP$ holds in sufficiently nice forcing extensions by posets with the Laver Property over a model of \ch. The proof of this, which comes next, is a simple and relatively straightforward adaptation of the (interesting and quite lovely) proof found in \cite{MHD}. 

Let $\P(2)$ denote the Boolean algebra with $2$ elements, and let $\P(2)^+$ denote the corresponding notion of forcing. Many of the ``usual'' Borel forcing notions $\Q$ (Cohen, random, Laver, Mathias, Miller, Sacks, Silver, etc.) are equivalent to $\P(2)^+ \times \Q$ (i.e., the lottery sum of two copies of $\Q$). 

\begin{theorem}\label{thm:MHD}
Suppose $\seq{\Q_\a}{\a < \w_2}$ is a sequence of Borel partial orders such that for each $\a < \w_2$, $\Q_\a$ is forcing equivalent to $\P(2)^+ \times \Q_\a$. 
Let $\PP_{\w_2}$ denote the length-$\w_2$ countable support iteration of this sequence. 
Assuming \ch, the following are equivalent:
\begin{enumerate}
\item $\PP_{\w_2}$ has the Laver property.
\item $\PP_{\w_2}$ forces $\cLP$. 
\item $\PP_{\w_2}$ forces $\dLPm$. 
\item $\PP_{\w_2}$ forces $\dLP$. 
\end{enumerate} 
\end{theorem}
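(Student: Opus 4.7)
The plan is to close the cycle (4)$\Rightarrow$(3)$\Rightarrow$(2)$\Rightarrow$(1)$\Rightarrow$(4). The descending implications are essentially free: (4)$\Rightarrow$(3)$\Rightarrow$(2) is the previous Proposition interpreted inside the generic extension, while (2)$\Rightarrow$(1) also follows from that Proposition, since if $\PP_{\w_2}$ forces $\cLP$ then it forces that the Laver property holds over $V$, which is by definition what it means for $\PP_{\w_2}$ to have the Laver property. So the content of the theorem lies in (1)$\Rightarrow$(4), which I would prove by adapting the method of \cite{MHD}.

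Working in $V$ under \ch, assume $\PP_{\w_2}$ has the Laver property and fix a $\PP_{\w_2}$-name $\dot F$ for a Borel map $2^{<\w_1}\to\w^\w$ together with a name $\seq{\dot h_\a}{\a<\w_1}$ for a sequence in $\A\cap V$. I would build a $\PP_{\w_2}$-name $\dot g$ for a function $\w_1\to\mathcal S\cap V$ witnessing $\dLP$. The central tool is a reflection lemma: using \ch, properness, the $\aleph_2$-cc of $\PP_{\w_2}$ (which follows from the first two), and crucially the self-similarity $\Q_\a\equiv\P(2)^+\times\Q_\a$ of each iterand, one shows that for every $\PP_{\w_2}$-name $\dot\tau$ for a function $\w_1\to 2$ and every condition $p$, there is (in $V$) a club $C\sub\w_1$ such that for each $\a\in C$ the restriction $\dot\tau\rest\a$ is, below $p$, equivalent to a $\PP_{\gamma_\a}$-name drawn from $V$ for some countable ordinal $\gamma_\a<\w_2$. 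The $\P(2)^+$ factor of each iterand is essential here: it provides the binary splitting at each stage that enables a bit-by-bit encoding of $\dot\tau\rest\a$ into an intermediate model. This is the same reflection phenomenon that drives the parametrized-$\diamondsuit$ machinery of \cite{MHD}, and I expect it to be the main technical obstacle in this proof.

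With the reflection lemma in hand, the construction of $\dot g$ proceeds by bookkeeping. Using \ch, enumerate in $V$ the $\aleph_1$-many potential reflected inputs at countable stages, namely pairs consisting of a $\PP_\gamma$-name for $\dot F \rest 2^{<\a}$ together with a value $h_\a\in\A\cap V$. At stage $\a$, apply the Laver property of the tail forcing $\PP_{[\gamma_\a,\w_2)}$ to the reflected name for $\dot F(\dot\tau\rest\a)$: whenever the latter is forced to be $V$-bounded, the Laver property yields in $V$ an $h_\a$-slalom capturing it, and we let $g(\a)$ be such a slalom, of exactly the prescribed width $h_\a$. A standard catch-your-tail diagonalization synchronizes these local choices so that a single $\dot g$ handles all $\dot F$, $\seq{\dot h_\a}{\a<\w_1}$, and $\dot\tau$ simultaneously. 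Stationarity of the capture set in the extension then follows by applying Fodor in the extension to the reflection club intersected with any hypothetical non-capturing club, contradicting the way $g(\a)$ was constructed at each relevant $\a$.
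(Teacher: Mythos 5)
Your high-level structure matches the paper exactly: the descending implications $(4)\Rightarrow(3)\Rightarrow(2)\Rightarrow(1)$ come from the previous proposition interpreted in the extension, and the whole content is $(1)\Rightarrow(4)$, proved by adapting Theorem~6.6 of \cite{MHD}. The paper's own proof of $(1)\Rightarrow(4)$ is deliberately terse: it says the argument of MHD Lemmas 6.8--6.12 goes through ``almost verbatim,'' with one precise modification to Lemma~6.12. Where MHD fix a single sequence $\seq{b_\xi}{\xi<\w_1}$ witnessing $\<A,B,E\>=\aleph_1$, one instead fixes, for \emph{each} $\a<\w_1$, an enumeration $\seq{g^\a_\xi}{\xi<\w_1}$ of all $h_\a$-slaloms in $V$ (possible by \ch), and then in the construction of $\dot g(\dlt)$ one has $h_t(\xi)$ force $\dot g(\dlt)=g^\dlt_\xi$ below a height-$\dlt$ node $t$ of the tree $\mathcal T$. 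This double-indexed replacement is exactly what accommodates the extra requirement $\mathsf w_{g(\a)}=h_\a$ in $\dLP$, and you do not isolate it.

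Your fill-in of the MHD machinery also has some real imprecisions worth flagging. First, a quantifier slip: you fix $\dot F$ and $\seq{\dot h_\a}{\a<\w_1}$ at the outset (correct, since $g$ in $\dLP$ is allowed to depend on them), but then claim ``a single $\dot g$ handles all $\dot F$, $\seq{\dot h_\a}{\a<\w_1}$, and $\dot\tau$ simultaneously'' and that your bookkeeping enumerates ``pairs consisting of a $\PP_\gamma$-name for $\dot F\rest 2^{<\a}$ together with a value $h_\a$'' -- both of which treat $\dot F$ and $h_\a$ as if they were still being quantified over. The only universal quantifier remaining once $\dot g$ is built is over $f:\w_1\to 2$. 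Second, ``countable ordinal $\gamma_\a<\w_2$'' is an internal contradiction; you presumably mean either a countable support in $\w_2$, or an ordinal below $\w_1$, and MHD's reflection lives in a different place than a generic reflection-to-a-club argument would suggest -- it runs through the tree $\mathcal T$ of conditions that the $\P(2)^+$ factors induce, not through a Fodor-style pressing-down in the extension. Third, stationarity of the guessing set in MHD is not obtained by Fodor; it comes from the density/genericity argument built into the tree construction. None of these is necessarily fatal if one actually follows MHD line by line as the paper directs, but as written your outline of the mechanism is not an accurate description of the argument you are appealing to, and it omits the one change that the paper identifies as the actual content of the adaptation.
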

\begin{proof}
By the previous proposition, $(4) \Rightarrow (3) \Rightarrow (2) \Rightarrow (1)$. It remains to show $(1) \Rightarrow (4)$: i.e., if $\PP_{\w_2}$ has the Laver property then it forces $\dLP$. 

The proof of this is a straightforward adaptation of the proof of Theorem 6.6 in \cite{MHD}, which spans Lemma 6.8 through Lemma 6.12 in \cite{MHD}. 
In fact, the proof of \cite[Theorem 6.6]{MHD} works almost verbatim for proving $(1) \Rightarrow (4)$ here, except that one small change needs to be made to the proof of Lemma 6.12. 
In that proof, the authors begin by fixing a sequence $\seq{b_\xi}{\xi < \w_1}$ of members of $B$ witnessing $\<A,B,E\> = \aleph_1$. 
In the present context, we are given a sequence $\seq{h_\a}{\a < \w_1}$, and we must fix for each $\a < \w_1$ a sequence $\< g_\xi^\a :\, \xi < \w_1 \>$ of $h_\a$-slaloms in $V$ witnessing the fact that every $V$-bounded $r \in \w^\w$ is captured by an $h_\a$-slalom in $V$. (This is true because we are assuming the Laver property over a model of \ch, and in fact we can take $\< g_\xi^\a :\, \xi < \w_1 \>$ simply to be an enumeration of all $h_\a$-slaloms in $V$). Then later in the proof, when defining $\dot g(\dlt)$, we should take $h_t(\xi)$ to force $\dot g(\dlt) = g_\xi^\dlt$ (rather than simply $b_\xi$) below a condition $t \in \mathcal T$ with height $\dlt$. 
\end{proof}

\begin{corollary}\label{cor:list}
$\dLP$ holds in the Laver model, the Mathias model, the Miller model, the Sacks model, and the Silver model. 
\end{corollary}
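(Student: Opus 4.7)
The plan is to derive this corollary directly from Theorem~\ref{thm:MHD}, so the entire task reduces to checking that each of the five named models is produced by a length-$\omega_2$ countable support iteration of Borel posets $\Q_\a$ satisfying (i) $\Q_\a \cong \P(2)^+ \times \Q_\a$ forcing-equivalently, and (ii) the iteration has the Laver property over a ground model of \ch.

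First I would recall that each of Laver, Mathias, Miller, Sacks, and Silver forcing is a Borel suborder of a Polish space (they are presented as sets of trees or as pairs $(s,A)$ in the Mathias case), so they are coded by Borel partial orders in the sense needed for Theorem~\ref{thm:MHD}. The standard definition of each of the five named ``models'' is as the countable support iteration of length $\w_2$ of the corresponding forcing, starting from a model of \ch, so (i) and (ii) above are exactly what needs to be verified.

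Next I would verify the self-product condition $\Q_\a \cong \P(2)^+ \times \Q_\a$ for each forcing. In every case this is immediate from the tree-like structure: below any condition one can pick a node with two incompatible extensions, each of whose cone is isomorphic (as a forcing) to the whole poset. For Laver, Miller, and Sacks forcing this uses the fact that below the stem (or below any splitting node) one can split into two subtrees, each of which again generates a copy of the forcing. For Silver forcing the analogous argument applies to an undecided coordinate. For Mathias forcing, given $(s,A)$ one picks two elements $n_0 < n_1$ in $A$ and passes to $(s\cat\<n_0\>,A\setminus(n_1+1))$ and $(s\cat\<n_1\>,A\setminus(n_1+1))$; each cone is again forcing-equivalent to Mathias forcing. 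In every case this exhibits the lottery sum of two copies of $\Q_\a$ as a dense suborder.

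For the Laver property of the iteration, I would appeal to two classical facts: each of these five forcings has the Laver property (Laver's original paper, Mathias's paper, Miller's rational perfect set paper, Baumgartner--Laver for Sacks, and the standard treatment of Silver forcing all establish this, and it can also be read off Shelah's preservation machinery), and the Laver property is preserved by countable support iterations of proper forcings with the Laver property (Shelah; see also Bartoszy\'nski--Judah). Hence hypothesis (1) of Theorem~\ref{thm:MHD} holds in each case, and Theorem~\ref{thm:MHD} yields hypothesis (4), namely $\dLP$, in the corresponding forcing extension.

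There is no real obstacle here beyond correctly invoking the preservation theorem for the Laver property under countable support iteration; the mild point to be careful about is simply that the five forcings must be presented in a Borel way and that the self-product condition holds, both of which are immediate from the standard presentations.
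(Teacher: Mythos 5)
Your proposal is correct and follows exactly the route the paper intends: the corollary is an immediate application of Theorem~\ref{thm:MHD}, and the only content is verifying that each of the five forcings is a Borel poset satisfying the lottery-sum condition $\Q \iso \P(2)^+ \times \Q$ and that the countable support iteration has the Laver property (each forcing has the Laver property, and Shelah's preservation theorem carries it through the iteration). Your verifications of the self-product condition and the appeal to the standard preservation result are accurate, so there is nothing to add.
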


\begin{theorem}\label{thm:main}
Suppose $\dagger(\mathsf{LP}_V)$ holds with respect to an inner model $V$ with $\w_1^V = \w_1$.  
Then there is a bowtie of functions, and consequently, there is a nontrivial automorphism of $\pwmf$. 
\end{theorem}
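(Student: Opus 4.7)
By Theorem~\ref{thm:bowties}, it suffices to construct a bowtie of involutions $\seq{f_\a}{\a<\w_1}$; the plan is to do so using $\cLP$.

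First, working in $V$, I would fix a partition of $\w$ into finite intervals $\seq{I_n}{n<\w}$ with $|I_n|$ growing very rapidly, together with a sequence $\seq{h_\a}{\a<\w_1}$ of slowly-growing widths in $\A\cap V$ (small enough that slalom-induced refinements still leave room in each $I_n$). For any $A\sub\w$ the characteristic function $\chi_A$ is $V$-bounded, so applying $\cLP$ gives a function $g:\w_1\to\mathcal S\cap V$ such that every $V$-bounded real--and in particular each $\chi_A$--is captured by some $g(\delta)$.

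Next I would build $\seq{D_\a,f_\a}{\a<\w_1}$ by transfinite recursion: each $D_\a\sub\w$ is partitioned into $2$-element blocks that $f_\a$ swaps, giving a fixed-point-free involution. At each successor stage $\a+1$, new blocks are added in intervals $I_n$ for $n$ past some threshold $n_{\a+1}$; each new block is chosen to be ``monochromatic'' with respect to every shape $s\in g(\g)(k_{n+1}-1)$ for $\g$ in a finite bookkeeping set $F_{\a+1}^n$ of past-stage ordinals, which is possible because $|I_n|$ greatly exceeds the number of atoms in the common refinement of these shapes. At each limit stage $\lambda$, $D_\lambda$ is formed by a diagonal almost-union along a fixed cofinal $\w$-sequence in $\lambda$, with $f_\lambda$ extending the prior $f_\beta$'s modulo finite, preserving the first two bowtie conditions.

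For the third condition, fix $A$ and let $\delta_A$ be any $\delta$ with $g(\delta)$ capturing $\chi_A$. If the bookkeeping is arranged so that $\delta_A\in F_\beta^n$ for every $\beta>\delta_A$ and all but finitely many $n$, then blocks added at stages $\beta>\delta_A$ in intervals $I_n$ with $n\geq N_A$ are monochromatic for the shapes of $g(\delta_A)$, and hence do not split $A$. The blocks in the finitely many exceptional intervals $I_n$ with $n<N_A$ contribute only finitely many splits, so $(D_\a\setminus D_{\delta_A})\cap A$ is almost $f_\a$-invariant for every $\a\geq\delta_A$, giving the third bowtie condition.

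The main obstacle is designing the bookkeeping. A straightforward pigeonhole argument rules out the most naive scheme: if $|F_\a^n|$ is finite and we demand $\g\in F_\beta^n$ for every $\beta\geq\g$ and $n\geq N_\g$, then $\w_1=\bigcup_n\{\g:N_\g\leq n\}$ would be a countable union of finite sets. A subtler scheme is therefore required, likely exploiting the hypothesis $\w_1^V=\w_1$ to fix coherent enumerations in $V$ (perhaps via walks along a $C$-sequence), and carefully balancing $|F_\a^n|$, the widths $h_\a$, and the interval sizes $|I_n|$, so that each $\g$ acquires a finite witness $N_\g$ while still leaving enough monochromatic pairs in each $I_n$ to supply the required infinite family of new blocks at each stage.
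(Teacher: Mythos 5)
The overall shape you have — applying $\cLP$ to slaloms that predict ``shapes'' of subsets of intervals $I_n$, then choosing pairs to be monochromatic for the guessed shapes so that the pairs added after stage $\dlt_A$ don't split $A$ — is exactly the right idea, and the paper's proof does use it. However, your proposal stalls on the bookkeeping problem that you yourself correctly identify at the end, and that obstacle is not resolvable in the form you've posed it; rather, the paper avoids it entirely by a different framing of the recursion. This is a genuine gap.

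Specifically: you propose that a block added at stage $\b$ in interval $I_n$ should be monochromatic for the shapes of $g(\g)$ for $\g$ ranging over a finite bookkeeping set $F_\b^n$, and then observe (correctly) that demanding $\g \in F_\b^n$ for all $\b \geq \g$ and all large $n$ would make $\w_1$ a countable union of finite sets. You speculate that walks along a $C$-sequence or some other coherent-sequence machinery would fix this. It would not, for the same pigeonhole reason: any scheme in which a block is made monochromatic for only a finite set of past slaloms, and in which a single fixed block is never revisited, cannot ensure that \emph{every} $\dlt < \w_1$ eventually governs all but finitely many future blocks. The fix is not a cleverer bookkeeping set; it is to make the construction $\sub^*$-\emph{decreasing} so that monochromaticity is automatically inherited. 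The paper builds a strictly $\sub^*$-decreasing sequence $\seq{J_\a}{\a<\w_1}$ with $|J_\a \cap I_n| \geq 2$ for all $n$, and at the successor step $\a \to \a+1$ it \emph{shrinks} $J_\a \cap I_n$ inside every currently-large interval to be monochromatic for the single slalom $g(\a)$. A pair $J_\a \cap I_n$ that first appears (i.e.\ first has size exactly $2$) at some stage $\a > \dlt_A$ is, for all but finitely many $n$, a subset of $J_{\dlt_A} \cap I_n$, which was already chosen monochromatic for $A$ at stage $\dlt_A = \dlt+1$, where $g(\dlt)$ captures the code of $A$. So only \emph{one} slalom, $g(\dlt_A - 1)$, is ever consulted for a given $A$, and containment does the rest. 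No finite bookkeeping set $F_\b^n$ is needed.

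Two further points your sketch leaves unaddressed, both essential. First, the widths $h_\a$ must live in $V \cap \A$, and in the paper they are computed from a sequence $\seq{S_\a}{\a<\w_1}$ (the sets of ``still-large'' intervals) and its enumerating functions $s_\a$, all of which must be built by a preliminary recursion \emph{inside $V$}, before $\cLP$ is invoked; this is where the hypothesis $\w_1^V = \w_1$ is actually used. Your proposal only gestures at ``fixing coherent enumerations in $V$'' without realizing the combinatorics $(S_\a, s_\a, h_\a)$ must be decided in advance. Second, the arithmetic must be tuned so the pigeonhole works at each interval: the paper takes $|I_n| = n$, arranges $|J_\a \cap I_{s_{\a+1}(k)}| = k 2^k$, and sets $h_\a$ so that at $I_{s_{\a+1}(k)}$ the slalom offers at most $k$ shapes, generating a partition into at most $2^k$ atoms, one of which must contain $k$ points of $J_\a \cap I_n$. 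Your phrase ``$|I_n|$ greatly exceeds the number of atoms'' is the right instinct, but without the decreasing-$J$ framing you would need each $I_n$ to accommodate atoms from arbitrarily many past slaloms simultaneously, which is precisely what your pigeonhole objection shows is impossible.
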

\begin{proof}
To begin, let $\seq{I_n}{n \in \w \setminus 3}$ denote the sequence of finite intervals in $\w$ such that 
$|I_n| = n$ and $\min(I_3) = 0$ and $\min(I_{n+1}) = 1 + \max(I_n)$ for all $n$. 
(For convenience, many countably infinite sets in this proof will be indexed with $\w \setminus 3 = \{3,4,5,\dots\}$ rather than $\w$. This simplifies some formulas later on.) 
Our goal is to use $\cLP$ to define a strictly $\sub^*$-decreasing sequence $\seq{J_\a}{\a < \w_1}$ of subsets of $\w$ such that for every $\a$, 
\begin{itemize}
\item[$\circ$] $\card{J_\a \cap I_n} \geq 2$ for all $n \geq 3$.
\item[$\circ$] Let $S_\a = \set{n \geq 3}{\card{J_\a \cap I_n} > 2}$, and let $s_\a$ denote the unique increasing bijection $\w \setminus 3 \to S_\a$. Then $\card{J_\a \cap I_{s_\a(k)}} = k$.
\end{itemize}
Our bowtie of functions will then consist of the functions 
$f_\a$ 
with domain 
$$D_\a \,=\, \textstyle \bigcup \set{J_\a \cap I_n}{\card{J_\a \cap I_n} = 2} \,=\, \bigcup_{n \in \w \setminus S_\a}J_\a \cap I_n,$$
where $f_\a$ is defined so that it swaps the two elements of $J_\a \cap I_n$ for every $n \in \w \setminus S_\a$. 
Any strictly $\sub^*$-decreasing sequence $\seq{J_\a}{\a < \w_1}$ with these properties gives rise to a sequence of permutations $f_\a: D_\a \to D_\a$ satisfying the first two items in the definition of a bowtie of functions. 
The crux of the proof is using $\cLP$ to recursively build a sequence $\seq{J_\a}{\a < \w_1}$ that also satisfies the third item in the definition. 

Before applying $\cLP$ we must choose a sequence $\seq{h_\a}{\a < \w_1}$ of functions in $\A \cap V$. 
Before doing this, however, we must first construct the sets $S_\a$ and their enumeration functions, as described above, by a transfinite recursion that takes place in $V$. 
(While $S_\a$ is defined from $J_\a$ above, it is a key feature of the proof that the $S_\a$ are defined first, before the $J_\a$. Later the $J_\a$ are chosen in such a way that the above relation between $S_\a$ and $J_\a$ holds.) 
After constructing the $S_\a$, we will define each $h_\a$ from $S_\a$. 

For the base case of the recursion, let $S_0 = \w \setminus 3$. 
For the successor step, suppose $S_\a$ is given. 
Let $s_\a$ denote the unique increasing bijection $\w \setminus 3 \to S_\a$, define $s_{\a+1}: \w \setminus 3 \to \w$ by setting $s_{\a+1}(k) = s_\a(k2^k)$ for all $k \geq 3$, and let $S_{\a+1} = \mathrm{Image}(s_{\a+1})$. 
In other words, we define $S_{\a+1}$ to be the set containing the $(k2^k - 3)^\mathrm{th}$ element of $S_\a$ for every $k \geq 3$. Clearly $S_{\a+1}$ is an infinite subset of $S_\a$, and $S_\a \setminus S_{\a+1}$ is also infinite. 
For the limit step, suppose $\seq{S_\xi}{\xi < \a}$ is given for some limit ordinal $\a$, and that it is a strictly $\sub^*$-decreasing sequence of sets. 
Let $S_\a$ be an infinite pseudo-intersection of $\set{S_\xi}{\xi < \a}$, and let $s_\a$ denote the unique increasing bijection $\w \setminus 3 \to S_\a$. Shrinking $S_\a$ further if needed, we may (and do) assume that $s_\xi \leq^* s_\a$ for all $\xi < \a$. 

As mentioned already, this entire recursion takes place in the inner model $V$, and thus the result of the recursion, the sequences $\seq{S_\a}{\a < \w_1}$ and $\seq{s_\a}{\a < \w_1}$, are members of $V$ as well. 
For each $\a < \w_1$, define $h_\a: \w \to \w$ so that $h_\a(n) = 1$ if $n < 2^{s_{\a+1}(2)}$, and otherwise  
$$h_\a(n) = j \quad \Leftrightarrow \quad n \in \big[ s_{\a+1}(j),s_{\a+1}(j+1) \big).$$
Observe that each $h_\a \in \A$, because each $s_{\a+1}$ is strictly increasing, and that $\seq{h_\a}{\a < \w_1} \in V$, because $\seq{s_\a}{\a < \w_1} \in V$. 

Fix a function $g: \w_1 \to \mathcal S \cap V$ witnessing $\cLP$ for $\seq{h_\a}{\a < \w_1}$. 
The $J_\a$ are built by transfinite recursion using $g$ as a guide. 
But before giving this construction, let us first consider one way in which subsets of $\w$ can be coded by functions $\w \to \w$. 
Define $b: \w \to \w$ by setting $b(0)=b(1)=b(2)=0$, and $b(n) = 2^{|I_n|} = 2^n$ for $n \geq 3$. 
Working in $V$, there is for each $n \geq 3$ a bijection $\phi_n$ from $b(n) = \{0,1,\dots,2^n-1\}$ to $\P(I_n)$. 
Using these bijections, there is a natural homeomorphism $\Phi$ between 
$K_b = \set{a \in \w^\w}{a \leq b}$ and $\prod_{n \in \w}\P(I_n)$. 
Thus we may (and do) think of the members of $K_b$ as ``coding'' subsets of $\w$. Specifically, $A \sub \w$ is coded by the function $n \mapsto A \cap I_n$ in $\prod_{n \in \w}\P(I_n)$, which corresponds to a member of $K_b$ via $\Phi$, and a function $a \in K_b$ codes the set $\bigcup_{n \in \w}\Phi(a)(n) = \bigcup_{n \in \w}\phi_n\big(a(n)\big)$. 

We are now ready to describe the construction of $\seq{J_\a}{\a < \w_1}$. 
For the base step, let $J_0 = \w$. Note that $|J_0 \cap I_n| = |I_n| = n$ for all $n \geq 3$, and therefore $\set{n \geq 3}{|J_0 \cap I_n| > 2} = \w \setminus 3 = S_0$. 

For the successor step, suppose $J_\a \sub \w$ and $\set{n \geq 3}{|J_\a \cap I_n| > 2} = S_\a$. Let $s_\a$ denote the unique increasing bijection $\w \setminus 3 \to S_\a$, as above, and suppose furthermore that $|J_\a \cap I_{s_\a(k)}| = k$ for all $k \geq 3$. 
We shall describe $J_{\a+1}$ by determining the value of $J_{\a+1} \cap I_n$ for every $n \in \w \setminus 3$. 
First, if $|J_\a \cap I_n| = 2$ (i.e., if $n \notin S_\a$) then set $J_{\a+1} \cap I_n = J_\a \cap I_n$. 
Second, if $n \in S_\a \setminus S_{\a+1}$, then let $J_{\a+1} \cap I_n$ be the first two elements of $J_\a \cap I_n$. 
Finally, consider $n \in S_{\a+1}$. 
Recall that $s_{\a+1}$ denotes the unique increasing bijection $\w \setminus 3 \to S_{\a+1}$, and let $k = s_{\a+1}^{-1}(n)$. 
Recall that  $s_{\a+1}(k) = s_\a(k2^k)$, and $|J_\a \cap I_n| = |J_\a \cap I_{s_{\a+1}(k)}| = |J_\a \cap I_{s_\a(k2^k)}| = k2^k$. Our goal is to choose, from among the $k2^k$ elements of $J_\a \cap I_n$, some $k$ of them to be $J_{\a+1} \cap I_n$. 

Recall that $g(\a)$ is an $h(\a)$-slalom, meaning that $g(\a)(n)$ consists of $h(\a)(n)$ finite sequences in $\w^{n+1}$, and for each $\s \in h(\a)(n)$, we have $\s(n) \in \w$. If $\s(n) < 2^n$ then $\s(n)$ codes a subset of $I_n$ via $\phi_n$. 
Let 
$$\B_k = \set{\phi_n\big(\s(n)\big) \sub I_n}{\s \in g(\a)(n) \text{ and }\s(n) < 2^n}.$$ 
In other words, $g(\a)$ is telling us, via coding, a set of subsets of $I_n$. 
Furthermore, $h_{\a}(n) = h_{\a}(s_{\a+1}(k)) = k$, so 
$\B_k$ is a set of at most $k$ subsets of $I_n$. 
(While $g(\a)(n)$ has size exactly $k$, some of the $\s \in g(\a)(n)$ may have $\s(n) \geq 2^n$, in which case they do not code subsets of $I_n$ and do not contribute to $\B_k$.) 
These $k$ sets generate a partition of $I_n$ with size at most $2^k$. 
Since $|J_\a \cap I_n| = k2^k$, there are some $k$ members of $J_\a \cap I_n$ that are all in the same piece of the partition. 
Therefore we may (and do) choose $J_{\a+1} \cap I_n$ to be a size-$k$ subset of $J_\a \cap I_n$ such that $J_{\a+1} \cap I_n$ is either contained in or disjoint from every $B \in \B_k$. 
This concludes the successor step of the construction of the $J_\a$. 
It is clear that our recursive hypotheses are preserved: we have $J_{\a+1} \sub J_\a$, and $S_{\a+1} = \set{n \geq 3}{|J_\a \cap I_n| > 2}$, and $|J_{\a+1} \cap I_{s_{\a+1}(k)}| = k$ for all $k \geq 3$. 

For the limit step, fix a limit ordinal $\a < \w_1$, and recall that $S_\a$ is a pseudo-intersection of $\set{S_\xi}{\xi < \a}$, and that $s_\xi \leq^* s_\a$ for all $\xi < \a$. 
We will define $J_\a$ in two steps: first we define a set $J^0_\a$ by describing $J^0_\a \cap I_n$ for all $n \geq 3$ (like with $J_{\a+1}$ in the successor step), and then we shrink $J^0_\a$ to the desired set $J_\a$. 
As a recursive hypothesis (one clearly preserved at successor steps, where $J_{\a+1} \sub J_\a$), assume $\seq{J_\xi}{\xi < \a}$ is $\sub^*$-decreasing. 

Fix an increasing sequence $\seq{\xi_n}{n < \w}$ of ordinals below $\a$ with $\xi_0 = 0$ and $\sup_{n < \w}\xi_n = \a$. 
Next, fix a strictly increasing sequence $\seq{k_n}{n < \w}$ of natural numbers with $k_0 > 3$ 
such that 
\begin{itemize}
\item[$\circ$] $S_\a \setminus k_n \sub S_{\xi_{n+1}}$, and 
\item[$\circ$] if $i \geq k_n$ then $s_{\xi_{n+1}}(i) \leq s_\a(i)$, and 
\item[$\circ$] $J_{\xi_{n+1}} \!\setminus s_{\xi_n}(k_n) \sub J_{\xi_n}$. 
\end{itemize}
It is possible to find such a sequence because $S_\a \sub^* S_{\xi_{n+1}}$, and $s_{\xi_{n+1}} \leq^* s_\a$, and $J_{\xi_{n+1}} \sub^* J_{\xi_n}$ for all $n$, and because each $s_{\xi_n}$ is strictly increasing. 
Define $J^0_\a \sub \w$ by taking
\begin{align*}
J^0_\a \cap I_j &= J_0 \cap I_j = I_j \qquad \text{ if } \ j \in \big[ 3,s_0(k_0) \big), \\
J^0_\a \cap I_j &= J_{\xi_{n+1}} \cap I_j \qquad \qquad\hspace{-4.2mm} \text{ if } \ j \in \big[ s_{\xi_n}(k_n),s_{\xi_{n+1}}(k_{n+1}) \big), 
\end{align*}

For each $n \in \w$, the third requirement in our choice of $k_n, k_{n-1}, \dots, k_0$ ensures that if $j \in \big[ s_{\xi_n}(k_n),s_{\xi_{n+1}}(k_{n+1}) \big)$ then 
$$J^0_\a \cap I_j = J_{\xi_{n+1}} \cap I_j \sub J_{\xi_n} \cap I_j \sub J_{\xi_{n-1}} \cap I_j \sub \dots \sub J_{\xi_0} \cap I_j.$$ 
That is, $J^0_\a \cap I_j \sub J_{\xi_\ell} \cap I_j$ for all $\ell \leq n+1$ when $j \in \big[ s_{\xi_n}(k_n),s_{\xi_{n+1}}(k_{n+1}) \big)$. 
It follows that $J_{\xi_\ell} \sub^* J_\a^0$ for all $\ell \in \w$. 
Furthermore, if $\xi < \a$ then there is some $\ell \in \w$ with $\xi < \xi_\ell$, hence $J_\xi \supseteq^* J_{\xi_\ell} \supseteq^* J^0_\a$. Consequently, $J^0_\a \sub^* J_\xi$ for every $\xi < \a$. 

For each $n \in \w$, the first two requirements in our choice of $k_n$ ensure that if $j \in \big[ s_{\xi_n}(k_n),s_{\xi_{n+1}}(k_{n+1}) \big)$ and $j = s_\a(i)$ for some $i$, then $s_\a(i) = s_{\xi_{n+1}}(i')$ for some $i' \geq i$. (The fact that $s_\a(i) \in S_\a$ implies $s_\a(i) \in S_{\xi_{n+1}}$ by the first requirement, and $s_{\xi_{n+1}}(i) \leq s_\a(i)$ by the second requirement. $s_\a(i) \in S_{\xi_{n+1}}$ implies $s_\a(i) = s_{\xi_{n+1}}(i')$ for some $i'$, and then $i \leq i'$ because $s_{\xi_{n+1}}(i) \leq s_\a(i)$ and $s_{\xi_{n+1}}$ is strictly increasing.) 
In this case, we have $|J^0_\a \cap I_{s_\a(i)}| = |J_{\xi_{n+1}} \cap I_{\s_{\xi_{n+1}}(i')}| = i' \geq i$. 
As this is true for every $n$, it follows that if $s_\a(i) > s_{\xi_0}(k_0)$ then $|J^0_\a \cap I_{s_\a(i)}| > i$. If $s_\a(i) < s_{\xi_0}(k_0)$ then $|J^0_\a \cap I_{s_\a(i)}| = |I_{s_\a(i)}| = s_\a(i) \geq i$, as $s_\a$ is strictly increasing. This shows that $\set{n \geq 3}{|J_\a^0 \cap I_n| > 2} \supseteq S_\a$, and that 
$|J_\a^0 \cap I_{s_\a(i)}| \geq i$ for all $i$. 

By the previous paragraph, it is possible to shrink $J_\a^0$ to a set $J_\a \sub J_\a^0$ such that 
$\set{n \geq 3}{|J_\a^0 \cap I_n| > 2} = S_\a$ and 
$|J_\a^0 \cap I_{s_\a(i)}| = i$ for all $i$. 
By the paragraph before the last one, we also have $J_\a \sub J^0_\a \sub^* J_\xi$ for all $\xi < \a$. 
This completes the limit step of the recursion. 

We can now define a bowtie of functions from the sequence $\seq{J_\a}{\a < \w_1}$, as outlined at the beginning of the proof: for each $\a < \w_1$ let 
$$D_\a \,=\, \textstyle \bigcup \set{J_\a \cap I_n}{\card{J_\a \cap I_n} = 2} \,=\, \bigcup_{n \in \w \setminus S_\a}J_\a \cap I_n,$$
and let $f_\a$ be the permutation of $D_\a$ that swaps the two elements of $J_\a \cap I_n$ for every $n \in \w \setminus S_\a$. Clearly each $f_\a$ has no fixed points. 

Recall that the $S_\a$ and the $J_\a$ are strictly $\sub^*$-decreasing. 
This implies the $D_\a$ are strictly $\sub^*$-increasing. 
And it is clear that if $\a \leq \b$ then, because $J_\b \sub^* J_\a$ and $J_\a$ and $J_\b$ both meet each $I_n$ in at least two points, it is true for all but finitely many $n$ that if $|J_\a \cap I_n| = 2$ then $J_\a \cap I_n = J_\b \cap I_n$. It follows that $f_\a =^* f_\b \rest D_\a$. 

Thus $\seq{f_\a}{\a < \w_1}$ satisfies the first two requirements in the definition of a bowtie of functions, and it remains to show that it satisfies the third. 
Fix $A \sub \w$. 
Let $a$ be the function that codes $A$ as described above: i.e., $A = \bigcup_{n \in \w}\Phi(a)(n) = \bigcup_{n \in \w}\phi_n\big(a(n)\big)$. 
Recall that $a(n) < 2^{|I_n|} = 2^n$ for all $n \geq 3$, which means that $a$ is bounded by a function in $V$. 
Because $g$ witnesses $\cLP$ for $\seq{h_\a}{\a < \w_1}$, the set 
$\set{\a < \w_1}{g(\a) \text{ captures } a}$ is nonempty. 
Fix some $\dlt < \w_1$ such that $g(\dlt)$ captures $a$. 

Let $\dlt_A = \dlt+1$. 
At stage $\dlt_A$ of our construction, when we defined $J_{\dlt_A}$, 
for every $n = s_{\dlt_A}(k) \in S_{\dlt_A}$ we chose $J_{\dlt_A} \cap I_n$ to be a size-$k$ subset of $J_{\dlt} \cap I_n$ such that $J_{\dlt_A} \cap I_n$ is either contained in or disjoint from every $B \in \B_k$, where 
$$\B_k = \set{\phi_n\big(\s(n)\big) \sub I_n}{\s \in g(\dlt)(n) \text{ and }\s(n) < 2^n}.$$ 
Because $g(\dlt)$ captures $a$, we have $a(n) \in \set{\s(n)}{\s \in g(\dlt)(n)}$ for every $n$. 
Consequently, $A \cap I_n = \phi_n\big(a(n)\big) \in \B_k$ for every $n$. 
Thus $J_{\dlt_A} \cap I_n$ is either contained in or disjoint from $A \cap I_n$ for every $n \in S_\a$. 

Now fix $\a$ with $\dlt_A \leq \a < \w_1$. 
We have $D_{\dlt_A} \sub^* D_\a$, and furthermore, $D_\a \setminus D_{\dlt_A}$ is the infinite union of sets of the form $J_\a \cap I_n$ with $|J_\a \cap I_n| = 2$, where $n \in S_{\dlt_A} \!\setminus S_\a$. 
And for all but finitely many such $n$, $J_\a \cap I_n \sub J_{\dlt_A} \cap I_n$ (because $J_\a \sub^* J_{\dlt_A}$). By the previous paragraph, the sets $J_{\dlt_A} \cap I_n$, for $n \in S_{\dlt_A}$, were chosen so that $J_{\dlt_A} \cap I_n$ is either contained in or disjoint from $A$. 
Since $f_\a$ simply swaps these pairs of points, we see that in all but finitely many cases, $f_\a \setminus f_{\dlt_A}$ swaps two points in $A$ with each other or two points not in $A$ with each other. In other words, $D_\a \setminus D_{\dlt_A}$ is almost $f_\a$-invariant.
\end{proof}

\begin{corollary}
There is a bowtie of functions, hence a nontrivial automorphism of $\pwmf$, in the Laver model, the Mathias model, the Miller model, the Sacks model, and the Silver model.  
\end{corollary}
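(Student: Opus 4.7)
The plan is straightforward: assemble this corollary from three results already established in the paper. By Corollary \ref{cor:list}, $\dLP$ holds in each of the five listed models. The proposition preceding Theorem \ref{thm:MHD} gives the chain of implications $\dLP \Rightarrow \dLPm \Rightarrow \cLP$, so $\cLP$ also holds in each, taking $V$ to be the ground model (which satisfies \ch) of the corresponding countable-support iteration that produces the model.

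To invoke Theorem \ref{thm:main}, I need only verify the side hypothesis that $\w_1^V = \w_1$ in the extension. Each of the Laver, Mathias, Miller, Sacks, and Silver forcings is a proper Borel poset with the Laver property, and a countable-support iteration of proper posets is again proper; proper forcing preserves $\w_1$. So in each of the five extensions, $\cLP$ holds with respect to an inner model $V$ of \ch whose $\w_1$ is still $\w_1$ in the extension, precisely matching the hypothesis of Theorem \ref{thm:main}.

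Applying Theorem \ref{thm:main} then yields a bowtie of functions in each of the five models, and the consequent nontrivial automorphism of $\pwmf$ is guaranteed either by the ``consequently'' clause of Theorem \ref{thm:main} itself or by a direct appeal to Theorem \ref{thm:bowties}. There is no obstacle of substance: the corollary is essentially a concatenation of Corollary \ref{cor:list}, the implications recorded in the proposition, and Theorem \ref{thm:main}, with the only extra ingredient being the routine preservation of $\w_1$ by proper forcing.
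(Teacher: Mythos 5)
Your proposal is correct and follows the same route as the paper's proof, which simply cites Theorems~\ref{thm:bowties}, \ref{thm:MHD}, and \ref{thm:main}; you have merely unpacked it through Corollary~\ref{cor:list} and the proposition, which is an equivalent chain. Your explicit verification that $\w_1^V = \w_1$ via properness is a small point the paper leaves implicit, and it is correct.
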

\begin{proof}
This follows immediately from Theorems \ref{thm:bowties}, \ref{thm:MHD}, and \ref{thm:main}. 
\end{proof}

\section{Automorphisms in the Mathias model}

Shelah and Stepr\={a}ns proved in \cite{SS2} that all automorphisms of $\pwmf$ in the Sacks model are somewhere trivial. In this section we prove the same for the Mathias model. 

Let $\M$ denote the Mathias forcing poset: 
$$\M \,=\, \set{(a,A)}{a \in [\w]^{<\w},\, A \in [\w]^\w, \text{ and }\max(a) < \min(A)},$$
with the order defined by setting $(b,B) \leq (a,A)$ if and only if $b \supseteq a$, $B \sub A$, and $b \sub a \cup A$. 
The \emph{Mathias model} refers to any model obtained by forcing with a length-$\w_2$ countable support iteration of $\M$ over a model of \ch. 
The main theorem of this section is: 

\begin{theorem}\label{thm:MathiasST}
In the Mathias model, every automorphism of $\pwmf$ is somewhere trivial. 
\end{theorem}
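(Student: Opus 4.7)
The plan is to follow the standard strategy for ``all automorphisms are somewhere trivial'' theorems in countable support iterations: reflect the automorphism to an intermediate stage, and then use the combinatorics of the iterand at the next stage to extract an infinite set on which the automorphism acts as an almost bijection.

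First, let $F$ be an automorphism of $\pwmf$ in a Mathias extension $V[G_{\w_2}]$ of a model $V$ of $\ch$. Using the properness of $\M$ and the standard factoring of the countable support iteration, I would find $\a < \w_2$ and a $\PP_\a \ast \dot\M$-name $\dot F_\a$ that captures the action of $F$ on $\nicefrac{\P(\w)^{V[G_{\a+1}]}}{\mathrm{Fin}}$. Since triviality of $F$ on an infinite $A \in V[G_{\a+1}]$ is witnessed by a bijection that lives in $V[G_{\a+1}]$ and persists to the final extension, the whole problem reduces to finding such an $A$ together with its witnessing bijection in $V[G_{\a+1}]$, and hence to analyzing the single Mathias name $\dot F_\a$ in $V[G_\a]$.

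Working in $V[G_\a]$, write $m$ for the generic Mathias real and pick a name $\dot B$ for a representative of $F\big([m]_\mathrm{Fin}\big)$. The driving mechanism is Mathias's pure decision property: for each $k \in \w$ and each candidate finite stem $s$, some pure extension $(s, C_{s,k})$ of $(s, \w \setminus (\max(s)+1))$ decides ``$k \in \dot B$'' and, more generally, decides the value of $\dot F_\a$ on the finitely many classes $[s \cup X]_\mathrm{Fin}$ for $X \sub \w \setminus (\max(s)+1)$ that are relevant modulo $k$. A fusion-style diagonalization along $m$, together with the Laver property of $\M$, produces an infinite $C \in V[G_\a]$ on which $\dot F_\a \rest \nicefrac{\P(C)}{\mathrm{Fin}}$ is read continuously from $m \cap C$ with slalom-like bounds coming from the Laver property; then an application of the Mathias Ramsey theorem to an appropriate finite-arity coloring on $[C]^{<\w}$ produces an infinite homogeneous $A \sub C$ on which the continuous reading collapses to an almost bijection $f : A \to f[A]$ witnessing triviality of $F$ on $A$.

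The main obstacle is promoting the continuous reading --- which is inherently a Borel assignment of sets to sets --- to a genuine pointwise almost bijection. Since $F$ acts on the $\mathrm{Fin}$-quotient, singletons get erased and do not directly define $f$. The trick, as in the Sacks-model argument of \cite{SS2}, is to examine the ``singleton-deltas'' $F\big([X \cup \{n\}]_\mathrm{Fin}\big) \triangle F\big([X]_\mathrm{Fin}\big)$ for infinite $X \sub C$, bound their sizes using the Laver property, and then apply the Ramsey theorem to make these deltas uniform (and ultimately singletons) on a fixed infinite homogeneous $A$. Here the combinatorics of Mathias forcing --- pure decision paired with the Ramsey theorem --- play the role that fusion of perfect trees plays in the Sacks argument, and I expect this final step to carry the main technical weight of the proof.
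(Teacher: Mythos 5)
Your proposal follows a genuinely different route from the paper. You are modeling your argument on the Shelah--Stepr\={a}ns Sacks-model proof: reflect $F$ to a stage $\a$, do a fusion/pure-decision analysis of a single Mathias name to get a continuous reading with Laver-property bounds, and then use the Ramsey-theoretic features of Mathias forcing to homogenize ``singleton-deltas'' until they collapse to an almost bijection on an infinite set. The paper does something quite different: after the same reflection, it exploits Mathias's factorization $\M \cong \pwmf \ast \M(\dot\U)$, and argues at the level of ultrafilters rather than at the level of tree/pure-decision combinatorics. Specifically, because $\pwmf$ is countably closed and $F_\a$ is nowhere trivial in $V[G_\a]$, the image $F_\a(\U)$ of the generic Ramsey ultrafilter $\U$ added by the $\pwmf$-stage is again a Ramsey ultrafilter, and $\U$ and $F_\a(\U)$ are provably not RK-equivalent (this uses that $\pwmf$ adds no new bijections). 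But the $\M(\U)$-stage gives $\U$ a pseudo-intersection $S$, hence $F(S)$ is a pseudo-intersection for $F_\a(\U)$, and then two reflection-type results of Shelah--Spinas (on Ramsey ultrafilters induced by reals appearing at a stage of the Mathias iteration) force $\U$ and $F_\a(\U)$ to be RK-equivalent via a ground-model bijection --- a contradiction. So the paper's argument is a short, soft proof-by-contradiction built on the factorization and on imported ultrafilter lemmas, while yours is a hands-on constructive search for a triviality witness.

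The gap in your proposal is concentrated in exactly the step you flag as carrying ``the main technical weight,'' and it is larger than an omitted routine verification. In the Sacks argument, the passage from Sacks-property slaloms to a pointwise almost bijection depends on the perfect-tree fusion structure in a way that has no obvious Mathias analogue: the Mathias pure decision property and the Ellentuck/Mathias Ramsey theorem give you homogeneity for Baire colorings, but homogeneity makes a coloring constant, it does not by itself shrink the singleton-deltas $F([X\cup\{n\}])\,\triangle\,F([X])$ down to singletons or make the resulting assignment injective. You would need a specific combinatorial lemma showing that, on a suitable Mathias-homogeneous set, the Laver-bounded deltas stabilize to an almost bijection; nothing in the proposal indicates how that lemma would go, and it is precisely this lemma that the paper's RK-inequivalence argument allows one to avoid. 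Until that lemma is supplied, the proposal is an outline of a plausible alternative strategy rather than a proof; note also that nowhere in the proposal is the $\pwmf\ast\M(\dot\U)$ factorization invoked, which is the main structural feature of Mathias forcing that the paper's proof leans on.
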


We prove this theorem mostly by piecing together facts from the literature. To begin, recall that 
a nonprincipal ultrafilter $\U$ on $\w$ is \emph{Ramsey} if for every infinite partition $\set{A_n}{n \in \w}$ of $\w$, either there is some $n$ with $A_n \in \U$, or else there is some $S \in \U$ such that $\card{S \cap A_n} \leq 1$ for all $n$. 
More generally, if $V$ and $W$ are two models of set theory with $V \sub W$, a filter $\U \in W$ is \emph{Ramsey over $V$} if for every infinite partition $\set{A_n}{n \in \w}$ of $\w$ with $\set{A_n}{n \in \w} \in V$, either there is some $n$ with $A_n \in \U$, or else there is some $S \in \U \cap V$ such that $\card{S \cap A_n} \leq 1$ for all $n$. In other words, $\U$ is Ramsey over $V$ if $\U \cap V$ satisfies the definition of a Ramsey ultrafilter within $V$ (but we do not require $\U \cap V \in V$). 

Suppose again that $V$ and $W$ are two models of set theory with $V \sub W$. 
If $X \sub \w$ and $X \in W$, then 
$\set{A \in V}{X \sub^* A \sub \w}$ 
is a filter, which we call the filter \emph{induced} by $X$ on $V$. 
If this filter is Ramsey over $V$, then we say that $X$ \emph{induces a Ramsey ultrafilter} on $V$. 
Viewing things the other way around, note that an ultrafilter $\U \in V$ is induced by a set $X \in W$ if and only if $X$ is a pseudo-intersection for $\U$ in $W$. 

If $\U$ is an ultrafilter on $\w$, then 
$$\M(\U) \,=\, \set{(a,A) \in \M}{A \in \U}.$$
This is a variant of the Mathias forcing, sometimes referred to as the Mathias forcing ``guided'' by the ultrafilter $\U$. 
If $G$ is an $\M(\U)$-generic filter over $V$, then $S = \textstyle \bigcup \set{a}{(a,A) \in G \text{ for some }A}$ 
is a pseudo-intersection for $\U$ in $V[G]$. 
In particular, if $\U$ is Ramsey in $V$, then $\M(\U)$ adds a subset of $\w$ that induces a Ramsey ultrafilter on $V$, namely $\U$. 

Recall that forcing with $\pwmf$ adds a Ramsey ultrafilter on $\w$. Specifically, if $G$ is a $\pwmf$-generic filter over $V$, then 
$\U = \set{A \sub \w}{[A]_\mathrm{Fin} \in G}$ 
is a Ramsey ultrafilter on $\w$ in $V[G]$ (i.e., $G$ itself is a Ramsey ultrafilter, modulo the natural transfer from $\pwmf$ to $\P(\w)$). 
The following fact was proved by Mathias in \cite{Mathias}: 

\begin{lemma}[Mathias]\label{lem:Mathias}
The poset $\M$ is forcing equivalent to the $2$-step iteration $\pwmf * \M(\dot \U)$, where $\dot \U$ is a name for the generic Ramsey ultrafilter added by forcing with $\pwmf$, as described above. 
\end{lemma}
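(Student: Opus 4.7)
The plan is to exhibit an explicit dense embedding
$$\pi \colon \M \longrightarrow \pwmf * \M(\dot\U), \qquad \pi(a,A) \,=\, \langle [A]_\mathrm{Fin},\, \check{(a,A)} \rangle,$$
where $\check{(a,A)}$ denotes the canonical $\pwmf$-name for the standard pair $(a,A)$. This makes sense because $[A]_\mathrm{Fin}$ forces $A \in \dot\U$, and $\max(a) < \min(A)$ is absolute, so $[A]_\mathrm{Fin} \forces (\check a,\check A) \in \M(\dot\U)$. Order-preservation is direct: if $(b,B) \leq_\M (a,A)$, then $B \sub A$ gives $[B]_\mathrm{Fin} \leq [A]_\mathrm{Fin}$ in $\pwmf$, and the remaining defining clauses $b \supseteq a$ and $b \sub a \cup A$ of $\leq_\M$ transfer verbatim to the $\M(\dot\U)$-order below $[B]_\mathrm{Fin}$.

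The main step is density. I would first note that $\pwmf$ (ordered by $\sub^*$) is $\sigma$-closed, since any $\sub^*$-descending sequence of infinite sets admits a pseudo-intersection by a standard diagonalization. Consequently $\pwmf$ adds no new reals, and any $\pwmf$-name for a subset of $\w$ is equivalent, below a dense set of conditions, to a ground-model set. So given an arbitrary $\langle p,\dot q\rangle \in \pwmf * \M(\dot\U)$, one can strengthen $p$ to some $[A']_\mathrm{Fin}$ deciding $\dot q = (\check a, \check B)$ for definite $a \in [\w]^{<\w}$ and $B \in V \cap [\w]^\w$ with $\max(a) < \min(B)$. From $[A']_\mathrm{Fin} \forces \check B \in \dot\U$ one reads off $A' \sub^* B$; then $A'' = (A' \cap B) \setminus (\max(a)+1)$ is cofinite in $A'$, so $[A'']_\mathrm{Fin} = [A']_\mathrm{Fin}$, and $(a,A'') \in \M$ with $\pi(a,A'') \leq \langle p,\dot q\rangle$.

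The last ingredient for a dense embedding is preservation of incompatibility. The easy direction is that any common $\M$-extension of $(a,A)$ and $(b,B)$ maps under $\pi$ to a common extension of $\pi(a,A)$ and $\pi(b,B)$. Conversely, if $\langle p,\dot q\rangle$ is a common extension of the $\pi$-images in the iteration, apply density to obtain some $\pi(c,C) \leq \langle p,\dot q\rangle$; then $[C]_\mathrm{Fin} \forces (c,C) \leq_{\M(\dot\U)} (a,A),(b,B)$, and since the $\M(\dot\U)$-order on standard conditions is simply $\leq_\M$, the pair $(c,C)$ is a common $\M$-extension. The only real obstacle, and it is a small one, is the name-chasing required to decide an $\M(\dot\U)$-name for a condition as a standard pair; the $\sigma$-closure of $\pwmf$ reduces this to routine bookkeeping.
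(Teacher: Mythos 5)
Your proof is correct. The paper itself does not prove this lemma; it states it as a black box attributed to Mathias, with a citation to \cite{Mathias}. So there is no in-paper argument to compare against, and what you give is a perfectly good self-contained proof: the map $(a,A) \mapsto \langle [A]_\mathrm{Fin},\, \check{(a,A)} \rangle$ is the standard dense embedding of $\M$ into $\pwmf * \M(\dot\U)$ (essentially the argument in Mathias's original treatment). The one place where the real content lives is exactly where you put it: $\sigma$-closure of $\pwmf$ lets you strengthen the first coordinate of an arbitrary iteration condition to a single $[A']_\mathrm{Fin}$ that decides $\dot q$ as a ground-model pair $(a,B)$, and then reading $A' \sub^* B$ off the fact that $[A']_\mathrm{Fin} \forces \check B \in \dot\U$ is what lets you pass to $A'' = (A' \cap B) \setminus (\max(a)+1)$ with $[A'']_\mathrm{Fin} = [A']_\mathrm{Fin}$. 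The incompatibility argument is also fine: once $\pi(c,C) \leq \pi(a,A)$ forces the absolute relations $c \supseteq a$, $C \subseteq A$, $c \subseteq a \cup A$, those relations hold in $V$, so $(c,C) \leq_\M (a,A)$. Nothing is missing.

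Two very small stylistic remarks. First, it would be slightly cleaner to say outright that a name for an element of $\M(\dot\U)$ can be decided to be a ground-model pair below a dense set precisely because $\pwmf$ adds no new reals (so the second coordinate, being a subset of $\w$, lies in $V$), rather than phrasing it as ``routine bookkeeping''; this is the one step a reader might pause on. Second, you don't need the easy direction of the incompatibility equivalence separately --- it is immediate from order-preservation --- but stating it does no harm.
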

\noindent In what follows, the \emph{ultrafilter added by forcing with $\M$} refers specifically to the Ramsey ultrafilter added by the first stage of this decomposition. 
Mathias' proof of this lemma shows that if $G$ is an $\M$-generic filter over $V$, then the ultrafilter added by forcing with $\M$ is definable in $V[G]$ as 
$$\U \,=\, \set{B \sub \w}{\text{there is some } (a,A) \in G \text{ with } A \sub^* B}.$$
Equivalently, this is the filter $\U_X$ induced on $V$ by the $\M$-generic real 
$$X \,=\, \textstyle \bigcup \set{a \in [\w]^{<\w}}{\text{there is some $A$ with } (a,A) \in G}.$$
In particular, the ultrafilter $\U$ added by forcing with $\M$ has a pseudo-intersection in $V[G]$, and $\U$ is induced on $V$ by this pseudo-intersection. 

We shall also need the following two results proved by Shelah and Spinas in \cite{ShelahSpinas}. 
These results are stated ``from the ground model'' in \cite{ShelahSpinas}, but we prefer to state them ``from the extension'' instead. 
For the statement of both results, suppose $V \models \ch$, let $\M_{\w_2}$ denote the length-$\w_2$ countable support iteration of $\M$, and let $G$ be an $\M_{\w_2}$-generic filter over $V$. 
For each $\a < \w_2$, let $\M_\a$ denote the first $\a$ stages of the iteration, and let $G_\a = G \restriction \M_\a$. 


Recall that $C \sub \w_2$ is an $\w_1$\emph{-club} if any increasing $\w_1$-sequence in $C$ has its limit in $C$.

\begin{lemma}[Shelah-Spinas]\label{lem:ShSp1}
There is an $\w_1$-club $C \sub \w_2$ such that the following holds for all $\a \in C$: 
If $A \sub \w$ induces a Ramsey ultrafilter on $V[G_\a]$, then there is some $A' \in V[G_{\a+1}]$ such that $A$ and $A'$ induce the same ultrafilter on $V[G_\a]$. 
\end{lemma}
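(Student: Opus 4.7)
The plan is to define $C$ by a standard reflection/closure argument for the iteration and then, for each $\a \in C$, exploit the Mathias factorization (Lemma~\ref{lem:Mathias}) at stage $\a$ together with preservation properties of the tail iteration.

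First, I would take $C$ to be the set of $\a<\w_2$ of cofinality $\w_1$ for which $H_{\aleph_2}^{V[G_\a]} = H_{\aleph_2}^{V[G]} \cap V[G_\a]$. Because $V\models\ch$ and the iteration has countable support of length $\w_2$, nice names for reals in $V[G]$ are coded by $\w_1$-sequences of antichains living in initial segments of the iteration, so an $\w_1$-increasing union of ordinals with this reflection property again has the property; this gives the $\w_1$-club. A consequence, used repeatedly below, is that for $\a\in C$ the trace $\U\cap V[G_\a]$ of any filter $\U\in V[G]$ on $V[G_\a]$ is itself an element of $V[G_\a]$.

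Next, fix $\a\in C$ and $A\in V[G]$ inducing a Ramsey ultrafilter $\U$ on $V[G_\a]$. Apply Lemma~\ref{lem:Mathias} to factor the $\a$th step of the iteration as $\pwmf*\M(\dot\U')$, which produces in $V[G_{\a+1}]$ a generic Ramsey ultrafilter $\U_\a$ on $V[G_\a]$ together with a pseudo-intersection $X_\a$ for it. If $\U=\U_\a$, then $A'=X_\a$ works immediately. In the remaining case $\U\neq\U_\a$, since both are Ramsey ultrafilters over $V[G_\a]$ one can fix $B\in (\U\setminus\U_\a)\cap V[G_\a]$ and form $A'=(X_\a\cap B)\cup (A\cap B)$-style modifications: a partition-and-selector argument, using Ramseyness of $\U_\a$ on partitions inside $V[G_\a]$ chosen to witness the difference, lets one extract from $X_\a$ together with a set in $V[G_\a]$ an $A'\in V[G_{\a+1}]$ whose trace on $V[G_\a]$ is exactly $\U$ rather than $\U_\a$.

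The main obstacle is proving that $A$ cannot genuinely induce a Ramsey ultrafilter on $V[G_\a]$ that first becomes visible at some stage beyond $\a+1$. That is, one must show that the tail iteration $\M_{\w_2}/G_{\a+1}$ does not add, over $V[G_{\a+1}]$, any new pseudo-intersection of a Ramsey ultrafilter on $V[G_\a]$ whose induced filter on $V[G_\a]$ is not already induced by an element of $V[G_{\a+1}]$. This is a strong preservation property of iterated Mathias forcing, and is the crux of the lemma. The reflection captured by $\a\in C$ is precisely what reduces the global statement about $V[G]$ to this local preservation statement about $V[G_{\a+1}]\sub V[G]$, since any witness to failure in $V[G]$ would be coded by an object of size $\aleph_1$ that $C$-reflection forces to already appear at stage $\a+1$.
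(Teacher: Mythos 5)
The paper does not prove this lemma; it simply cites it as a paraphrase of Proposition~2.3 of Shelah--Spinas~\cite{ShelahSpinas}. Your proposal attempts to reconstruct a proof, so the comparison is against that external result rather than an argument in the paper itself.

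Your sketch has the right overall shape (reflect to build the club, then factor stage $\a$ via Lemma~\ref{lem:Mathias}), but there are genuine gaps. First, the claim that ``for $\a\in C$ the trace $\U\cap V[G_\a]$ of any filter $\U\in V[G]$ on $V[G_\a]$ is itself an element of $V[G_\a]$'' is false: the generic Ramsey ultrafilter $\U_\a$ added by the $\pwmf$ factor at stage $\a$ is a Ramsey ultrafilter over $V[G_\a]$ whose trace on $V[G_\a]$ is precisely the $\pwmf$-generic filter, hence not in $V[G_\a]$ for any $\a$. So whatever closure property you want from $C$, it cannot give you that.

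Second, the case $\U\neq\U_\a$ does not go through by the modification you describe. You propose to build $A'$ from $X_\a$, a set $B\in V[G_\a]$, and $A$, but $A$ lives in $V[G]$, not $V[G_{\a+1}]$, so $(X_\a\cap B)\cup(A\cap B)$ is not visibly a member of $V[G_{\a+1}]$. More fundamentally, two distinct Ramsey ultrafilters over $V[G_\a]$ need not have pseudo-intersections obtainable from one another by intersecting and unioning with ground-model sets; there is no Ramsey-partition argument that converts a pseudo-intersection of $\U_\a$ into one for an arbitrary other $\U$. (Compare Lemma~\ref{lem:ShSp2}: what you \emph{do} get, from the Shelah--Spinas analysis, is that any such $\U$ is RK-equivalent to $\U_\a$ via a bijection in $V[G_\a]$ --- which is far weaker than having a $V[G_\a]$-definable transformation of pseudo-intersections, and which in the application is used to derive a contradiction, not to produce $A'$.)

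Finally, you correctly identify ``the crux of the lemma'' --- that the tail $\M_{\w_2}/G_{\a+1}$ does not add a real inducing a Ramsey ultrafilter on $V[G_\a]$ whose trace differs from one already induced at stage $\a+1$ --- but you leave it unproved, calling it ``a strong preservation property of iterated Mathias forcing.'' That preservation statement \emph{is} the content of Proposition~2.3 of Shelah--Spinas; it is established there by a fusion argument on Mathias conditions and a careful analysis of names, not by reflection. Reflection gets you the club and lets the argument localize, but it does not by itself produce the preservation. As written, your proposal reduces the lemma to its own hard core and stops there.
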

\begin{proof}
This is a paraphrase of Proposition 2.3 in \cite{ShelahSpinas}. 
\end{proof}

Recall that two filters $\U$ and $\V$ on $\w$ are \emph{RK-equivalent} if there is a bijection $h: \w \to \w$ such that $A \in \U$ if and only if $h[A] \in \V$. 
When this is the case, we say that $\U$ and $\V$ are RK-equivalent \emph{via the function $h$}.

\begin{lemma}[Shelah-Spinas]\label{lem:ShSp2}
Let $\U \in V[G_{\a+1}]$ denote the Ramsey ultrafilter added by forcing with $\M$ over $V[G_\a]$ at stage $\a$ of the iteration. 
Suppose $A \sub \w$ and $A \in V[G_{\a+1}]$. 
If $A$ induces a Ramsey ultrafilter $\mathcal V$ on $V[G_\a]$, then $\mathcal U$ and $\mathcal V$ are RK-equivalent via some $h \in V[G_\a]$. 
\end{lemma}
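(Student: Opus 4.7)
The plan is to decompose the one-step Mathias extension using Lemma \ref{lem:Mathias} and then exploit the combinatorial rigidity of Mathias forcing guided by a Ramsey ultrafilter. First, write $V[G_{\a+1}] = V[G_\a][H][X]$, where $H$ is $\pwmf$-generic over $V[G_\a]$ (producing $\U$) and $X$ is the subsequent $\M(\U)$-generic real over $V[G_\a][H]$, with $X$ a pseudo-intersection of $\U$. Because $\pwmf$ is countably closed, it adds no new reals, so $\P(\w)^{V[G_\a]} = \P(\w)^{V[G_\a][H]}$; in particular any function $\w \to \w$ lying in $V[G_\a][H]$ already lies in $V[G_\a]$. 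This absorbs the $H$-step and lets us focus on the $\M(\U)$-step, where any ``ground model'' function we construct will in fact be in $V[G_\a]$.

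The heart of the argument is to find, in $V[G_\a]$, a set $B \in \U$ and an injection $h : B \to \w$ with $h[X \cap B] =^* A$. The idea is to apply pure decision for $\M(\U)$ to an $\M(\U)$-name $\dot A$ for $A$: for each $n$, a dense set of conditions decides ``$n \in \dot A$'' purely, i.e., only by refining the tail in $\U$. Assembling these decisions along the generic and thinning $\U$-sets using the Ramsey property, one produces in $V[G_\a][H] = V[G_\a]$ a description of how $A$ depends on $X$. The hypothesis that $A$ induces an ultrafilter $\V$ on $V[G_\a]$ --- not merely a filter --- forces this dependence to have an especially rigid form: on some $B \in \U$, each element of $X \cap B$ contributes exactly one element of $A$, and this assignment is captured by the desired function $h$. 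Heuristically, if $A$ depended on two or more elements of $X$ simultaneously, one could cook up a partition of $\w$ in $V[G_\a]$ that $\V$ could not decide, contradicting ultrafilter-ness.

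Once such $h$ is in hand, extend it to a bijection $\tilde h : \w \to \w$ in $V[G_\a]$ (both $B$ and $h[B]$ are coinfinite subsets of $\w$, using $B \in \U$ and that $A$ is coinfinite). The verification is then a short computation: for any $C \in V[G_\a]$, using $B \in \U$, the fact that $X \sub^* B$, and $h[X \cap B] =^* A$, one has
\[
C \in \V \ \iff \ A \sub^* C \ \iff \ X \cap B \sub^* h^{-1}[C] \ \iff \ \tilde h^{-1}[C] \in \U,
\]
so $\tilde h$ witnesses the RK-equivalence of $\U$ and $\V$.

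The main obstacle is the combinatorial extraction of $h$. Pure decision yields decisions depending on finite stems of $X$, but showing that these decisions collapse --- modulo a suitable $B \in \U$ --- to a single ground-model injection, rather than to some more complicated functional dependence, is where the argument really does work. It requires genuinely using both the Ramsey-ness of $\U$ (to iteratively diagonalize within $\U$) and the fact that $\V$ is an ultrafilter (to rule out multi-to-one dependencies). This is the combinatorial core of Shelah and Spinas's argument in \cite{ShelahSpinas}, whose treatment I would follow closely.
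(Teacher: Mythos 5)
The paper offers no proof of this lemma beyond citing Proposition~2.4 of \cite{ShelahSpinas}, and since you explicitly state you would follow Shelah and Spinas for the combinatorial core (extracting the injection $h$ via pure decision, Ramsey-ness of $\U$, and ultrafilter-ness of $\V$), your approach is the same as the paper's. Your verification that an injection $h \in V[G_\a]$ on some $B \in \U$ with $h[X \cap B] =^* A$ yields the RK-equivalence is correct. One small technical point: $B \in \U$ alone does not make $\w \setminus B$ infinite (nonprincipal ultrafilters contain all cofinite sets), so before extending $h$ to a bijection $\tilde h : \w \to \w$ you should first shrink $B$ within $\U$ so that both $\w \setminus B$ and $\w \setminus h[B]$ are infinite; this is easily arranged and does not disturb the final chain of equivalences.
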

\begin{proof}
This is a paraphrase of Proposition 2.4 in \cite{ShelahSpinas}. 
\end{proof}

For the remainder of this section, we abuse notation slightly by letting $F(A)$ denote some (any) representative of the equivalence class  $F\big( [A]_{\mathrm{Fin}} \big)$ whenever $F$ is an automorphism of $\pwmf$ and $A \sub \w$. We also write $F(\U)$, rather than $F[\U]$, for the image of a filter $\U$ under $F$.

\begin{lemma}\label{lem:RamseyFilter}
Suppose $F$ is an automorphism of $\pwmf$, and let $\dot \U$ be the canonical name for the generic ultrafilter added by forcing with $\pwmf$. Then $\forces_{\pwmf} F(\dot \U)$ is Ramsey.
\end{lemma}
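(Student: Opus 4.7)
The plan is to exploit genericity of $\U$ directly, rather than trying to verify the Ramsey property partition-by-partition. Recall that $\pwmf$ as a forcing notion is $\sigma$-closed, and the generic filter $G$ itself \emph{is} the generic Ramsey ultrafilter (modulo translating between $[A]_\mathrm{Fin}\in G$ and $A\in\U$). So showing $F(\dot\U)$ is Ramsey amounts to showing that $F(\U)$ also arises as the ultrafilter associated to some $V$-generic filter on $\pwmf$, after which we may apply the ambient fact that forcing with $\pwmf$ yields a Ramsey ultrafilter.

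The first step is to observe that, because $F\in V$ is an automorphism of the Boolean algebra $\pwmf$, it induces a bijection of $(\pwmf)^+$ (the conditions of the forcing) that preserves the ordering in both directions. Consequently, $F$ and $F^{-1}$ each carry dense open subsets of the forcing to dense open subsets. The second step is to define $F[G]=\set{F(x)}{x\in G}$ and verify it is a $V$-generic filter on $\pwmf$: closure under the Boolean operations of meet and complement-of-lower-bounds is immediate from $F$ being an automorphism, and for any dense open $D\in V$ we have $F^{-1}[D]\in V$ dense open, hence $G\cap F^{-1}[D]\neq\emptyset$, so $F[G]\cap D\neq\emptyset$.

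The third step is to identify the ultrafilter on $\omega$ corresponding to $F[G]$. By our conventions on the action of $F$ on ultrafilters (treating $\U$ as the set of ground-model $A$ with $[A]_\mathrm{Fin}\in G$, and using $F(\U)=\set{F(B)}{B\in\U}$ modulo $\mathrm{Fin}$), we have
\[
A\in F(\U)\iff F^{-1}([A]_\mathrm{Fin})\in G\iff [A]_\mathrm{Fin}\in F[G],
\]
so $F(\U)$ is exactly the ultrafilter on $\omega$ associated to $F[G]$. Since $V[F[G]]=V[G]$ (as $F\in V$), the fact recalled in the excerpt — that forcing with $\pwmf$ yields a Ramsey ultrafilter — applied to the generic $F[G]$ gives that $F(\U)$ is a Ramsey ultrafilter in $V[G]$. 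This holds for every $V$-generic $G$, which is precisely the forcing assertion $\forces_{\pwmf} F(\dot\U)$ is Ramsey.

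I do not expect any serious obstacle. The only care needed is bookkeeping: keeping straight the translation between conditions in $G\subset(\pwmf)^+$ and the ground-model representatives they stand for, and making sure the definition of the ``pushforward'' $F(\U)$ used here agrees with the pushforward implicitly intended in the proposition statement. Everything else — preservation of dense open sets by an automorphism, and equality $V[F[G]]=V[G]$ — is standard.
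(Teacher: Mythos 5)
Your proof is correct, and it takes a genuinely different route from the paper's. The paper verifies the Ramsey property directly by a density argument: working in $V$, it fixes an arbitrary partition $\set{A_n}{n \in \w}$ and a condition $[A]_{\mathrm{Fin}}$, then splits into the case where $F(A) \cap A_n$ is infinite for some $n$ (extend to $[A \cap F^{-1}(A_n)]_{\mathrm{Fin}}$) and the case where $F(A) \cap A_n$ is finite for all $n$ (extend to $[F^{-1}(S)]_{\mathrm{Fin}}$ for a selector $S \sub F(A)$), using $\sigma$-closure to see that all relevant partitions live in $V$. You instead exploit the standard fact that a ground-model automorphism of the forcing carries $V$-generic filters to $V$-generic filters, note that $V[F[G]]=V[G]$, and identify the ultrafilter associated with $F[G]$ as exactly $F(\U)$, at which point the known Ramseyness of the $\pwmf$-generic ultrafilter finishes the argument. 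Your approach is more conceptual and in fact more general: it shows, with no extra work, that \emph{any} property the generic ultrafilter is forced to have is also enjoyed by $F(\dot\U)$, whereas the paper's argument is specific to the combinatorics of being Ramsey. The paper's version is more elementary and self-contained (it doesn't invoke the automorphism/genericity machinery), and its partition-by-partition structure parallels the argument used later in Lemma~\ref{lem:RKinequivalence2}, which may be why the authors chose it. Both proofs are valid; your identification $A \in F(\U) \iff [A]_{\mathrm{Fin}} \in F[G]$ is the right bookkeeping and lines up with the paper's conventions on $F(\U)$.
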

\begin{proof}
This proof is essentially identical to the usual proof that $\forces_{\pwmf} \dot \U$ is Ramsey. 
Let $G$ be $\pwmf$-generic over $V$, and let $\U = \set{A \sub \w}{[A]_\mathrm{Fin} \in G}$ denote the generic ultrafilter added by forcing with $\pwmf$. 
Because $\pwmf$ is a countably closed forcing, it adds no new reals, and in particular it adds no new partitions of $\w$. 

Working in the ground model, let $\set{A_n}{n \in \w}$ be an infinite partition of $\w$ and fix a condition $[A]_\mathrm{Fin} \in \pwmf$. 
If there is some $n$ such that $F(A) \cap A_n$ is infinite, then $[A \cap F^{-1}(A_n)]_\mathrm{Fin} \leq [A]_\mathrm{Fin}$ and $[A \cap F^{-1}(A_n)]_\mathrm{Fin} \forces A_n \in F(\dot \U)$. 
On the other hand, if $F(A) \cap A_n$ is finite for all $n$, then there is an infinite $S \sub F(A)$ such that $|S \cap A_n| \leq 1$ for all $n$; then $[F^{-1}(S)]_\mathrm{Fin} \leq [A]_\mathrm{Fin}$ and $[F^{-1}(S)]_\mathrm{Fin} \forces S \in F(\dot \U)$. 
Either way, there is an extension of $[A]_\mathrm{Fin}$ forcing ``either there is some $n$ with $A_n \in F(\dot \U)$, or else there is some $S \in \dot \U$ such that $\card{S \cap A_n} \leq 1$ for all $n$.''

Hence, in the extension, either there is some $n$ with $A_n \in F(\U)$, or else there is some $S \in F(\U)$ such that $\card{S \cap A_n} \leq 1$ for all $n$. As this is true for an arbitrary partition $\set{A_n}{n \in \w}$ in $V$, and because every partition of $\w$ in $V[G]$ is already in $V$, this shows $F(\U)$ is Ramsey in $V[G]$. 
\end{proof}

\begin{lemma}\label{lem:RKinequivalence1}
Suppose $F$ is an automorphism of $\pwmf$, and $A \in [\w]^\w$, and $h$ is an almost bijection from $A$ to $F(A)$. 
If $F \rest \nicefrac{\mathcal P(A)}{\mathrm{Fin}}$ is not induced by $h$, then there is an infinite $B \sub A$ such that $F(B) \cap h[B]$ is finite. 
\end{lemma}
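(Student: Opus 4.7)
The plan is to prove the contrapositive directly by a short case analysis on which half of the symmetric difference $F(X)\triangle h[X]$ is infinite, for some witnessing set $X \sub A$. Assume $F \rest \nicefrac{\P(A)}{\mathrm{Fin}}$ is not induced by $h$. By the definition of ``induced by $h$'', this means there is some $X \sub A$ with $F(X) \neq^* h[X]$. At least one of the sets $h[X]\setminus F(X)$ or $F(X)\setminus h[X]$ must then be infinite, and in each case I will exhibit an infinite $B \sub X \sub A$ such that $F(B)\cap h[B]$ is finite.

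In the first case, suppose $h[X]\setminus F(X)$ is infinite. Since $h$ is an almost bijection from $A$ to $F(A)$, it is a bijection between cofinite subsets, so $h^{-1}$ makes sense on a cofinite portion of $F(A)$ and the set
\[
B \,=\, X \cap \mathrm{dom}(h) \cap h^{-1}\!\bigl[\, h[X]\setminus F(X)\, \bigr]
\]
is infinite. By construction $h[B] \sub h[X]\setminus F(X)$, so $h[B]\cap F(X)=\0$; since $B \sub X$ implies $F(B) \sub^* F(X)$, this forces $F(B)\cap h[B] \sub^* F(X)\cap h[B]=\0$, so $F(B)\cap h[B]$ is finite.

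In the second case, suppose $F(X)\setminus h[X]$ is infinite. Let $Y = F(X)\setminus h[X]$ and, using that $F$ is an automorphism of $\pwmf$, pick $B' \sub \w$ with $F(B')=^* Y$. Since $[Y]_\mathrm{Fin}\leq [F(X)]_\mathrm{Fin}$, we get $B' \sub^* X$, and hence $B := B'\cap X$ is an infinite subset of $X$ with $F(B)=^* F(B')=^* Y$. Then $F(B) \sub^* \w\setminus h[X]$ while $h[B] \sub h[X]$, so $F(B)\cap h[B] \sub^* Y\cap h[X]=\0$, giving $F(B)\cap h[B]$ finite.

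There is no real obstacle here; the only thing to be careful about is that ``almost bijection'' permits $h$ to be undefined on finitely many points of $A$ and that $h$ need not be injective on all of $A$, so one must take the intersection with $\mathrm{dom}(h)$ (and remember that $h$ is injective on a cofinite set) in the first case, and one must pull $Y$ back under $F$ rather than $h$ in the second case.
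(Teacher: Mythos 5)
Your proof is correct, and the overall structure (split on which half of $F(X)\triangle h[X]$ is infinite) is the same as the paper's. Your first case is identical to the paper's up to finite modifications: your $B = X \cap \mathrm{dom}(h) \cap h^{-1}[h[X]\setminus F(X)]$ is the same set as the paper's $C \setminus h^{-1}[F(C)]$. In the second case, however, the paper takes $B = h^{-1}[F(X)\setminus h[X]]$, i.e., it pulls the witness set back under $h$, not under $F$ as you do. That also works: the resulting $B$ lies in $A$ (since $h^{-1}$ maps into $A$), and $h[B] =^* F(X)\setminus h[X]$ is disjoint from $h[X]$, so by injectivity of $h$ on a cofinite set $B\cap X$ is finite, hence $F(B)\cap F(X)$ is finite, and since $h[B] \sub^* F(X)$ we get $F(B)\cap h[B] \sub^* F(B)\cap F(X)$ finite. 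So your closing remark that one ``must pull $Y$ back under $F$ rather than $h$'' is not actually correct---both pullbacks succeed. Your variant has the mild aesthetic advantage that $B\sub X$, at the cost of invoking surjectivity of $F$; the paper's keeps the two cases more symmetric.
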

\begin{proof}
Assuming $F \rest \nicefrac{\mathcal P(A)}{\mathrm{Fin}}$ is not induced by $h$, there is some infinite $C \sub A$ such that $F(C) \triangle h[C] = (F(C) \setminus h[C]) \cup (h[C] \setminus F(C))$ is infinite. 
If $h[C] \setminus F(C)$ is infinite, then let $B = C \setminus h^{-1}[F(C)]$. 
If $F(C) \setminus h[C]$ is infinite, then let $B = h^{-1}[F(C) \setminus h[C]]$. 
\end{proof}

The following lemma can be considered implicit in \cite{GoldsternShelah} or \cite{ShelahSpinas}, though it is not explicitly stated in either. 

\begin{lemma}\label{lem:RKinequivalence2}
Suppose $F$ is a nowhere trivial automorphism of $\pwmf$, and let $\dot \U$ be a name for the generic ultrafilter added by forcing with $\pwmf$. Then $\forces_{\pwmf}$ $\,\dot \U$ and $F(\dot \U)$ are not RK-equivalent. 
\end{lemma}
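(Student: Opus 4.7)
The plan is to suppose for contradiction that some condition $[A]_\mathrm{Fin}$ forces $\dot \U$ and $F(\dot \U)$ to be RK-equivalent via some name $\dot h$ for a bijection $\w \to \w$, and to derive that $F$ must be trivial on a set, contradicting the hypothesis. The key observation to get started is that $\pwmf$ is countably closed (any countable $\sub^*$-decreasing sequence has a pseudo-intersection), so names for reals can be decided: one can extend $[A]_\mathrm{Fin}$ to a condition $[A^*]_\mathrm{Fin}$ below which $\dot h = \check h$ for some ground-model bijection $h: \w \to \w$.

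The next step would be to translate the forced RK-equivalence into a Boolean inequality in $\pwmf$. For any infinite $X \sub A^*$, the condition $[X]_\mathrm{Fin}$ forces $X \in \dot \U$, hence $h[X] \in F(\dot \U)$, and this in turn means $[F^{-1}(h[X])]_\mathrm{Fin} \in \dot G$. Since $\pwmf$ is separative, ``$p \forces q \in \dot G$'' is equivalent to $p \leq q$, so this yields the ground-model inequality $[X]_\mathrm{Fin} \leq [F^{-1}(h[X])]_\mathrm{Fin}$, i.e., $F(X) \sub^* h[X]$ for every infinite $X \sub A^*$.

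To apply Lemma~\ref{lem:RKinequivalence1} to $F$ and the restriction $h\rest A^*$, I first need $h[A^*] =^* F(A^*)$ so that $h$ acts as an almost bijection $A^* \to F(A^*)$. Taking $X = A^*$ gives $F(A^*) \sub^* h[A^*]$, and for the other direction, if $h[A^*]\setminus F(A^*)$ were infinite, then $X = h^{-1}[h[A^*]\setminus F(A^*)] \sub A^*$ would be infinite with $F(X) \sub^* h[X] = h[A^*]\setminus F(A^*)$, contradicting $F(X) \sub^* F(A^*)$. So $h\rest A^*$ is an almost bijection $A^* \to F(A^*)$, and because $F$ is nowhere trivial, $F \rest \nicefrac{\P(A^*)}{\mathrm{Fin}}$ is not induced by it. Lemma~\ref{lem:RKinequivalence1} then supplies an infinite $B \sub A^*$ with $F(B) \cap h[B]$ finite, contradicting $F(B) \sub^* h[B]$ from the previous paragraph.

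The one mildly delicate point is the countable-closure step, where one must be a little careful that $\dot h$ is a \emph{ground-model} bijection below some extension of $[A]_\mathrm{Fin}$ (not merely in the extension). The rest of the argument is a routine use of separativity combined with Lemma~\ref{lem:RKinequivalence1}; the main conceptual content is that $\pwmf$ forces the generic ultrafilter $\dot \U$ to ``see'' the automorphism $F$ clearly enough to force triviality on a set whenever an RK-equivalence is in play.
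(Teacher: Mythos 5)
Your proof is correct, and it rests on the same two pillars as the paper's own: the $\sigma$-closure of $\pwmf$ (so that any witnessing bijection may be assumed to lie in the ground model) together with Lemma~\ref{lem:RKinequivalence1}. The organization is different, though. The paper argues by density: for an arbitrary ground-model bijection $h$ and an arbitrary condition $[A]_\mathrm{Fin}$, it invokes nowhere-triviality and Lemma~\ref{lem:RKinequivalence1} to find $B \sub A$ infinite with $F(B) \cap h[B]$ finite, so that $[B]_\mathrm{Fin}$ forces $h$ not to witness RK-equivalence; then it quantifies over all $h \in V$. You instead argue by contradiction: you decide the name $\dot h$ to a ground-model $h$ below some $[A^*]_\mathrm{Fin}$, use separativity to extract the uniform ground-model inequality $F(X) \sub^* h[X]$ for every infinite $X \sub A^*$, and then derive a contradiction with Lemma~\ref{lem:RKinequivalence1}. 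One modest advantage of your route is that the intermediate estimate lets you verify explicitly that $h \rest A^*$ is an almost bijection $A^* \to F(A^*)$, which is the stated hypothesis of Lemma~\ref{lem:RKinequivalence1}; the paper's proof applies that lemma to $h \rest A$ without checking this, implicitly relying on the (true but unstated) observation that the lemma's conclusion is immediate whenever $h[A] \neq^* F(A)$. So your version is, if anything, a bit more self-contained on that point.
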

\begin{proof}
Let $G$ be $\pwmf$-generic over $V$, and let $\U = \set{A \sub \w}{[A]_\mathrm{Fin} \in G}$ denote the generic ultrafilter added by forcing with $\pwmf$. 
Because $\pwmf$ is a countably closed forcing, it adds no new reals, and in particular it adds no new bijections $\w \to \w$, so if $h \in V[G]$ is a bijection $\w \to \w$ then $h \in V$. 
Thus, in order to show $\U$ and $F(\U)$ are not RK-equivalent in $V[G]$, it suffices to show they are not RK-equivalent via a bijection $h \in V$. 

Working in the ground model, fix a bijection $h: \w \to \w$ and fix a condition $[A]_\mathrm{Fin} \in \pwmf$. 
As we are assuming $F$ is nowhere trivial, $F \rest \nicefrac{\P(A)}{\mathrm{Fin}}$ is not induced by $h \rest A$. 
By Lemma~\ref{lem:RKinequivalence1}, there is an infinite $B \sub A$ such that $F(B) \cap h[B]$ is finite. So $[B]_\mathrm{Fin} \leq [A]_\mathrm{Fin}$, and 
$$[B]_\mathrm{Fin} \,\forces\, B \in \dot \U,\, F(B) \in F(\dot \U) \text{, and consequently, } h[B] \notin F(\dot \U).$$
Thus an arbitrary condition $[A]_\mathrm{Fin} \in \pwmf$ has an extension forcing $\dot \U$ and $F(\dot \U)$ are not RK-equivalent via $h$. 

Hence $\U$ and $F(\U)$ are not equivalent via $h$ in $V[G]$. As $h$ was arbitrary bijection in $V$, and every bijection in $V[G]$ is already in $V$, this shows $\U$ and $F(\U)$ are not RK-equivalent in $V[G]$.
\end{proof}

\begin{proof}[Proof of Theorem~\ref{thm:MathiasST}]
Suppose $V \models \ch$. 
Let $\M_{\w_2}$ denote the length-$\w_2$ countable support iteration of $\M$, and let $G$ be a $\M_{\w_2}$-generic filter over $V$. 
For each $\a < \w_2$, let $\M_\a$ denote the first $\a$ stages of the iteration, and let $G_\a = G \restriction \M_\a$. 

Aiming for a contradiction, suppose that $F$ is a nowhere trivial automorphism of $\pwmf$ in $V[G]$. For each $\a < \w_2$, let $F_\a = F \restriction V[G_\a]$. 
By a standard reflection argument, there is an $\w_1$-club $C \sub \w_2$ such that for all $\a \in C$, $F_\a \in V[G_\a]$ and $V[G_\a] \models$ ``$F_\a$ is a nowhere trivial automorphism of $\pwmf$." 
By intersecting $C$ with the $\w_1$-club set described in Lemma~\ref{lem:ShSp1}, we may (and do) assume that if $\a \in C$ and 
$A \sub \w$ induces a Ramsey ultrafilter on $V[G_\a]$, then there is some $A' \in V[G_{\a+1}]$ such that $A$ and $A'$ induce the same ultrafilter on $V[G_\a]$.

Fix $\a \in C$. 
At stage $\a$ of our iteration, we force with $\M$ in $V[G_\a]$ to obtain $V[G_{\a+1}]$. 
By Lemma~\ref{lem:Mathias}, we may view this as a $2$-step process: $V[G_{\a+1}] = V[G_\a][H][K]$, where $H$ is $\pwmf$-generic over $V[G_\a]$ and $K$ is $\M(\U)$-generic over $V[G_\a][H]$, where $\U \in V[G_\a][H]$ denotes the Ramsey ultrafilter added by forcing with $\pwmf$. 
Working in $V[G_\a][H]$, Lemma~\ref{lem:RamseyFilter} tells us that $F(\U)$ is a Ramsey ultrafilter, and 
Lemma~\ref{lem:RKinequivalence2} tells us that $\U$ and $F(\U)$ are not RK-equivalent. 

In $V[G]$ the filter $\U$ is no longer an ultrafilter: it has a pseudo-intersection $S$, added by forcing with $\M(\U)$ over $V[G_\a][H]$. Because $F$ is an automorphism of $\pwmf$, this implies $F(S)$ should be a pseudo-intersection for $F(\U)$ as well. 
In other words, $F(\U)$ is an ultrafilter in $V[G_\a][H]$ and 
$$F(\U) \,=\, \set{X \in V[G_\a][H]}{F(S) \sub^* X \sub \w}.$$ 
Thus $F(S)$ induces a Ramsey ultrafilter on $V[G_\a]$, namely $F(\U)$. 

Because $\a \in C$, there is some $A' \sub \w$ such that $A' \in V[G_{\a+1}]$ and $A'$ induces the ultrafilter $F(\U)$ on $V[G_\a]$. 
By Lemma~\ref{lem:ShSp2}, this implies $\U$ and $F(\U)$ are RK-equivalent via some $h \in V[G_\a]$, contradicting  Lemma~\ref{lem:RKinequivalence2}. 
\end{proof}



\end{document}